\documentclass[12pt]{amsart}
\usepackage{amsmath}
\usepackage{amssymb}
\usepackage{amscd}
\usepackage[all]{xy}
\usepackage{appendix}
\usepackage{hyperref}
\def\lra{\longrightarrow}
\def\map#1{\,{\buildrel #1 \over \lra}\,}

\def\oo{\otimes}
\newcommand{\pc}{{\mathrm{pc}}}

\def\zar{{\mathrm{zar}}}

\def\cF{\mathcal F}
\def\cI{\mathcal I}

\def\cMpctf{{\mathcal M}_{\text{pctf}}}
\def\Kzar{K^{\mathrm{zar}}}
\def\KQ{K^Q}
\def\Sh{\mathbf{Sh}}
\newcommand{\bbH}{\mathbb H}
\newcommand{\Schk}{\mathrm{Sch}/k}

\newcommand{\A}{\mathbb{A}}
\newcommand{\G}{\mathbb{G}}
\newcommand{\R}{\mathbb{R}}

\newcommand{\Z}{\mathbb{Z}}
\newcommand{\N}{\mathbb{N}}
\newcommand{\bbP}{\mathbb{P}}

\newcommand{\bu}{\mathbf{u}}

\newcommand{\p}{\mathfrak{p}}
\newcommand{\Hom}{\operatorname{Hom}}

\newcommand{\pt}{\mathbf{pt}}
\def\Spec{\operatorname{Spec}}
\def\MSpec{\operatorname{MSpec}}
\def\MProj{\operatorname{MProj}}

\newcommand{\Gm}{\G_{\text m}}

\input xy
\xyoption{all}

\numberwithin{equation}{section}

\theoremstyle{plain}
\newtheorem{thm}[equation]{Theorem}
\newtheorem{prop}[equation]{Proposition}
\newtheorem{lem}[equation]{Lemma}
\newtheorem{cor}[equation]{Corollary}

\theoremstyle{definition}
\newtheorem{defn}[equation]{Definition}
\newtheorem{ex}[equation]{Example}

\theoremstyle{remark}
\newtheorem{rem}[equation]{Remark}

\begin {document}
\title{$K$-theory of Matroids and Monoid Schemes}
\date{\today}
\author{Christian Haesemeyer}
\address{School of Mathematics and Statistics, University of Melbourne,
VIC 3010, Australia}
\email{christian.haesemeyer@unimelb.edu.au}
\urladdr{https://blogs.unimelb.edu.au/christian-haesemeyer/}

\author{Charles Weibel}
\address{Math.\ Dept., Rutgers University, New Brunswick, NJ 08901, USA}
\email{weibel@math.rutgers.edu}\urladdr{http://math.rutgers.edu/~weibel}
\keywords{algebraic $K$-theory, monoid schemes, matroids}
\begin{abstract}
This paper continues the study of the $K$-theory of monoid schemes,
using it to give 
a useful definition of the higher $K$-theory of a matroid
via its Bergman fan.
\end{abstract}
\maketitle

We define the higher $K$-theory of a matroid $M$
as the higher $K'$-theory of its associated toric monoid scheme.
This agrees with the existing definition of $K_0(M)$
(see \cite{LLPP}).
In more detail,
a matroid $M=(E,\cF)$ has an associated Bergman fan $\Delta$,
a toric fan built from its lattice of flats $\cF$, and a toric fan
has an associated toric monoid scheme $X$. The realization
$X_k$ of $X$ over a field $k$ is a toric variety.

\begin{defn}\label{def:K-of-matroid}
The $K$-theory spectrum $K(E,\cF)$ of a matroid $M=(E,\cF)$ is defined
to be the spectrum $\Kzar(X) \cong K'(X)$, where $X$ is
the toric monoid scheme associated to the Bergman fan of $M$.
We write $K'_n(M)$ for the homotopy group
$K'_n(X) = \pi_nK'(X)$. 
\end{defn}

Here $K'$ is the $K'$-theory defined in \cite{HW}, and $\Kzar$ is a
Zariski sheafified version of the $K$-theory of \cite{ELY} that we
introduce in \ref{defn:K-Zar} below.
Along the way, and to justify our definition, we prove a
number of foundational results about $\Kzar$ and $K'$ of monoid
schemes. For example, we prove in Theorem \ref{thm:K-of-sm-proj}
that if $X$ is a smooth and proper monoid scheme then
the groups $K'_n(X)$ are finite for $n>0$, and are determined by
$K'_0(X)$ and the stable homotopy groups $\pi_n^s$.

\medskip

Here is how we have organized our exposition.
In Section \ref{sec:toric}, we briefly recall the definition and
terminology of monoid schemes used in this paper, following
\cite{CHWW}. For example, we restrict to monoid schemes which are
partially cancellative and torsion free (pctf).

In section \ref{sec:higher-K}, we review the $K'$-theory of a pctf
monoid scheme $X$, such as the monoid scheme associated to a fan.
$K'(X)$ is defined as the $K$-theory of the
quasi-exact category of sheaves of partially cancellative
$\mathcal{O}_X$-sets (see \cite{HW} and \cite{CW} for details
of the definition and basic properties of $K'(X)$).

If $k$ is a commutative ring, we write $X_k$ for the scheme
which is the $k$-realization of $X$. In Section
\ref{sec:higher-K}, we prove that
the $K$-theory of a toric variety over any field can be recovered
from the $K$-theory of the underlying monoid scheme (see Theorem
\ref{thm:comparison}):

\begin{thm}\label{thm:comparison-intro}
Let $X$ be a toric monoid scheme, and $k$ a field.
Then the natural map of spectra is a weak homotopy equivalence:
\[
K(k)\wedge K' (X)\xrightarrow{\sim} K'(X_k)
\]
In particular, the realization map is an isomorphism:
\[ K'_0(X)\xrightarrow{\sim} K'_0(X_k). \]
\end{thm}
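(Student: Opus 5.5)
We argue by induction on the number of cones (or orbits) of the fan $\Delta$ defining $X$, using localization sequences on both sides and checking that the natural map is compatible with them. The natural map comes from realization: a partially cancellative $\mathcal O_X$-set $F$ gives a coherent sheaf $k[F]$ on $X_k$. This functor is exact on the quasi-exact category used in \cite{HW}. It is also $K(k)$-linear after smashing with $K(k)$, because $K(k)$ acts on $K'(X_k)$ through the pairing $k$-vector spaces $\times$ coherent sheaves. This produces $K(k)\wedge K'(X)\to K'(X_k)$.

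\emph{Base case.} If $X$ is a torus $T=\MSpec(L)$ for a lattice $L\cong\Z^r$, then by \cite{HW} (or \cite{CW}) $K'(T)\simeq K(\mathbb S[L])$ is the $K$-theory of free pointed $L$-sets. The known computation gives $K(\mathbb S[\Z^r])\simeq \bigvee_{i}\binom{r}{i}\Sigma^i \mathbb S$, the monoid-scheme analogue of the fundamental theorem; it follows by iterating the $\Gm$ case, i.e.\ $K(\mathbb S[t,t^{-1}])\simeq \mathbb S\vee\Sigma\mathbb S$. On the other side, the fundamental theorem for the regular ring $k[L]$ gives $K(k[L])\simeq\bigvee\binom{r}{i}\Sigma^iK(k)$, split by the units $t_j$. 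Both splittings are generated by the classes of $t_j$ in $K_1$, and realization sends one to the other. Hence the map $K(k)\wedge K'(T)\to K'(T_k)$ is an equivalence.

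\emph{Inductive step.} A toric monoid scheme has a closed orbit stratification. Choose a closed orbit $Z\subset X$, which is itself a torus for a quotient lattice, with open complement $U$ given by a fan with fewer cones. Use the localization cofiber sequence $K'(Z)\to K'(X)\to K'(U)$ for pctf monoid schemes established in \cite{HW}, together with Quillen's localization sequence $K'(Z_k)\to K'(X_k)\to K'(U_k)$. Smashing with $K(k)$ preserves cofiber sequences. Realization commutes with closed pushforward and open restriction, since $k[-]$ is exact and compatible with $i_*$ and $j^*$, so we get a map of cofiber sequences. The outer maps are equivalences by the base case and induction, so the five lemma gives the middle one.

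For $K'_0$, note that $K(k)$ and all spectra here are connective and $\pi_0 K(k)=\Z$. Therefore $\pi_0(K(k)\wedge K'(X))=K'_0(X)$, and the isomorphism on $K'_0$ follows.

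The main obstacle is the naturality check in the inductive step: that realization takes the \cite{HW} localization sequence to Quillen's, with homotopies compatible enough to give a map of cofiber sequences, and not merely a commuting square on homotopy groups. The first attempt will be to realize both as sequences of exact functors between (quasi-)exact categories, namely sheaves supported on $Z$, all sheaves, and restriction to $U$, and to apply Waldhausen/Quillen naturality. The base case computation of $K(\mathbb S[\Z^r])$ is a secondary point, which we take from \cite{CW}.
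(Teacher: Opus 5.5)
Your proposal is correct and is essentially the paper's own argument. Both use localization (Theorem \ref{thm:K-loc} on the monoid side, Quillen's sequence on the $k$-side) along the orbit stratification, take the torus as base case via the fundamental theorem, and deduce the $K'_0$ statement from connectivity (the paper's K\"unneth edge map). The only difference is cosmetic: you remove one closed orbit at a time by induction on the number of cones, whereas the paper filters by open unions of orbits and inducts on dimension.

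One notational fix: the $K$-theory of finitely generated free pointed $L$-sets is $\Sigma^\infty_+ BL$ (Barratt--Priddy--Quillen--Segal), not $K(\mathbb{S}[L])$. The latter is Waldhausen's $A(BL)$ and does not split as $\bigvee_i\binom{r}{i}\Sigma^i\mathbb{S}$. The values you actually use are nonetheless the correct ones for $K'(T)$.
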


It follows from this theorem that our definition
\ref{def:K-of-matroid} of the $K$-theory of matroids
extends the existing definition of $K_0$, as described, for
example, in \cite{LLPP}.

In Section \ref{sec:Kzar}, we connect $K'(X)$ to the
$K$-theory spectrum $K^Q(X)$ of \cite{ELY}.
Although $K^Q$ 
is not appropriate for our purposes, its Zariski (homotopy)
sheafification $\Kzar(X)$ satisfies the formal properties we need
(see Theorems \ref{thm:K-vs-K'} and \ref{thm:scdh-descent}).

In section \ref{sec:proj-bundle-formula}, we verify a
projective bundle formula
for the $K'$-theory of smooth monoid schemes (see Theorem
\ref{thm:proj-bundle-formula}). Combining the results of sections
\ref{sec:Kzar} and \ref{sec:proj-bundle-formula}, we prove our second
main result (see Theorem \ref{thm:K-of-sm-proj}), where $\pi_n^s$
denotes the stable homotopy groups of spheres:
 
 \begin{thm}
Let $X$ be a smooth proper toric monoid scheme, and $k$ a field.
Then $K_0(X_k)$ is a free abelian group, independent of $k$,
and there are natural isomorphisms 
\[
\mu^X_n: K_0(X_k)\otimes_\Z \pi^s_n
= K_0(X_k)\otimes_\Z \Kzar_n(\pt) \xrightarrow{\sim} \Kzar_n(X).
\]
\end{thm}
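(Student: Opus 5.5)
The strategy is to induct on the fan via the combinatorial decomposition that smooth proper toric monoid schemes admit. For a smooth complete fan one has a sequence of blow-ups and blow-downs along smooth torus-invariant centers (weak factorization for toric varieties, which is purely combinatorial and therefore holds for the monoid schemes). The more economical route is a cellular argument. The toric monoid scheme $X$ has a filtration by closed torus-orbit closures, equivalently a decomposition into affine cells $\A^m$ coming from a shelling of the complete fan. Both $\Kzar$ and $K'$ then yield cofiber sequences that split. I would combine three earlier results, applied in the following order.

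\textbf{Step 1: $\Kzar(X)\simeq K'(X)$ and localization.} Since $X$ is smooth, Theorem \ref{thm:K-vs-K'} identifies $\Kzar(X)$ with $K'(X)$. For $K'$ of pctf monoid schemes we have localization sequences $K'(Z)\to K'(X)\to K'(X\setminus Z)$ for closed $Z$ (from \cite{HW}), and homotopy invariance $K'(Y\times\A^1)\simeq K'(Y)$. Hence $K'(\A^m)\simeq K'(\pt)\simeq \Kzar(\pt)$. By the Barratt--Priddy--Quillen theorem, $\Kzar(\pt)$ is the sphere spectrum, which gives $\Kzar_n(\pt)=\pi_n^s$.

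\textbf{Step 2: Cellular decomposition.} Using the projective bundle formula (Theorem \ref{thm:proj-bundle-formula}), or the standard Bialynicki-Birula-type cell decomposition coming from a generic one-parameter subgroup (which is combinatorial), I would write $X$ as an increasing union of opens whose successive closed complements are cells $\A^{m_i}$. The localization sequences then exhibit $K'(X)$ as an iterated extension of copies of the sphere spectrum $\mathbb S$. The same construction over $k$ exhibits $K(X_k)$ as an iterated extension of copies of $K(k)$, and these sequences are compatible under $K(k)\wedge -$ by Theorem \ref{thm:comparison}.

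\textbf{Step 3: Splitting.} This is the main obstacle: showing that the extensions split, so that $K'(X)\simeq\bigvee_i \mathbb S$. I would use the fact that each cell closure $\overline{C_i}$ is itself a smooth proper toric monoid scheme (an orbit closure). Its structure sheaf then gives a class in $K'_0(X)$, which defines a map $\mathbb S\to K'(X)$ splitting the corresponding cofiber sequence. Equivalently, I would argue by induction via the projective bundle formula on a toric resolution. Granting the splitting, $K'_0(X)\cong\Z^N$ with $N$ the number of cells, that is, the number of maximal cones. Theorem \ref{thm:comparison} then gives $K_0(X_k)\cong K'_0(X_k)\cong K'_0(X)$, which is free and independent of $k$.

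\textbf{Step 4: Defining $\mu^X_n$.} Define $\mu^X_n$ via the module structure of $\Kzar(X)$ over $\Kzar(\pt)=\mathbb S$, sending $a\otimes\sigma$ to $a\cdot\sigma$ with $a\in K'_0(X)\cong K_0(X_k)$. This map is natural. By the splitting it is the direct sum of the identity maps $\pi_n^s\to\pi_n\mathbb S$, hence an isomorphism. If a genuine cell decomposition is unavailable, I would fall back on weak factorization: both sides transform the same way under blow-up along a smooth invariant center, by the blow-up formula derived from Theorem \ref{thm:proj-bundle-formula}, and the statement holds for $\bbP^n$ products, which reduces everything to checking naturality of $\mu$.
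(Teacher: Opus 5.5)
\paragraph{The cellular route does not cover the proper case.} Step~2 needs a filtrable decomposition of $X$ into affine cells. That is available when $X$ is projective, but the theorem is about smooth \emph{proper} $X$. On a monoid scheme there is no Bia\l{}ynicki-Birula argument to run, so the cells have to be produced on the fan. Concretely, you need an ordering $\sigma_1,\dots,\sigma_N$ of the maximal cones such that each $\sigma_i$ meets the subfan generated by $\sigma_1,\dots,\sigma_{i-1}$ in a union of facets of $\sigma_i$, i.e.\ a shelling. Then the toric opens $U_i$ of the subfans $\langle\sigma_1,\dots,\sigma_i\rangle$ satisfy $U_i\setminus U_{i-1}\cong\A^{m_i}$, closed in $U_i$.
\begin{itemize}
\item When the fan is the normal fan of a polytope, i.e.\ when $X$ is projective, a generic linear functional gives such an ordering.
\item For complete non-projective regular fans there is no such general construction. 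Correspondingly, Bia\l{}ynicki-Birula decompositions of complete non-projective varieties need not be filtrable.
\item Nothing you cite produces the ordering; in particular the projective bundle formula does not.
\end{itemize}
So as written, your main argument proves the theorem only for projective $X$.

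\paragraph{Steps~3--4 in the projective case.} There they can be made to work, though not as you phrase them. The class of $\overline{C_i}$ restricts on $U_i$ to the image of $K'(C_i)\to K'(U_i)$, so it splits nothing. The correct argument runs as follows.
\begin{itemize}
\item Maps $\mathbb S\to E$ are determined by $\pi_0E$.
\item $K'_0(U_i)\to K'_0(U_{i-1})$ is onto by Theorem~\ref{thm:K-loc}.
\item By induction, $K'_n(U_i)\to K'_n(U_{i-1})$ is onto for every $n$, so the connecting maps vanish.
\item The five lemma then shows $\mu^{U_i}_n$ is an isomorphism.
\end{itemize}
You never need the cell closures to be smooth or proper.

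\paragraph{The fallback is the paper's proof, but it skips the essential steps.} The paper inducts on $d=\dim X$, connects $X$ to $\bbP^d$ through blow-ups along smooth equivariant centers (Proposition~\ref{prop:factorization}), and uses Proposition~\ref{prop:proj-space} as the base. The heart is Proposition~\ref{prop:single-blow-up}, which needs four ingredients you do not mention.
\begin{itemize}
\item The homotopy cartesian square of Theorem~\ref{thm:scdh-descent}. It supplies the long exact Mayer--Vietoris sequence for $\Kzar_n$; the projective bundle formula alone gives no blow-up formula in positive degrees.
\item The inductive hypothesis applied to the center $Y$, which is a smooth proper toric monoid scheme of smaller dimension. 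This is combined with Theorem~\ref{thm:proj-bundle-formula} for $Y'\to Y$.
\item Split exactness of $0\to\Kzar_0(X)\to\Kzar_0(Y)\oplus\Kzar_0(X')\to\Kzar_0(Y')\to0$, from Thomason's blow-up formula and $K_{-1}(Y_k)=0$. This is what keeps it exact after tensoring with $\pi^s_n$.
\item A diagram chase showing $\delta_n=0$, using that $\mu^{Y'}_n$ is onto, before the five lemma can be applied.
\end{itemize}
Naturality of $\mu$ by itself does not give any of this.

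Finally, weak factorization gives a zigzag of blow-ups and blow-downs. A zigzag suffices only because the single blow-up step is an ``if and only if'', which is how the paper states it.
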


In Section \ref{sec:samples}, we apply this theorem to compute
the $K$-theory of some sample matroids. Finally,
Appendix \ref{app:mult} collects some homotopy-theoretic
constructions needed in the body of the paper.

 \vspace{.1in}


\section{Monoid schemes.}\label{sec:toric}

By a {\it monoid} we mean a pointed commutative monoid,
i.e., a pointed set $A$ with basepoint $0$, equipped with a
associative, commutative product $A\times A \to A$ with
identity $1$ such that $0\cdot a = 0$ for all $a\in A$.
The notions of ideal, and prime ideal, make sense in a monoid $A$,
and we can form the prime ideal spectrum
$\MSpec(A)$, and hence monoid schemes,
just as in traditional ring theory and algebraic geometry.

 Here is some useful vocabulary. A monoid $A$ is {\it cancellative} if for $a, b, c \in A$
the conditions $ab = ac$ and $a\ne0$ together imply that $b = c$.
In this case, the unpointed monoid $A- \{0\}$ injects into
its group completion $A^+$,
and $\{0\}$ is the unique minimal prime ideal of $A$.
The {\it normalization} of a cancellative monoid $A$ is
$A_{nor}=\{a\in A^+: a^n\in A\}$
for some $n>0$, and $A$ is {\it normal} if $A=A_{nor}$.

A monoid is {\it partially cancellative and torsion free} (in short, {\it pctf}) if it is a quotient of a torsion free cancellative monoid by an ideal. A monoid scheme is pctf if its sheaf of monoids has pctf stalks; we write $\cMpctf$ for the category of separated, pctf monoid schemes of finite type. Given a commutative ring $k$, the $k$-monoid algebra functor extends to a $k$-realization functor $\cMpctf\to \Schk$, written $X\mapsto X_k$.

\begin{ex}
Let $A$ be the multiplicative monoid $\{0,1\}$, the initial object in the category of pointed
 commutative monoids. We write $\pt = \MSpec(A)$ for the associated monoid scheme, which is the final object in $\cMpctf$. We have $\pt_k = \Spec (k)$.
\end{ex}

\begin{defn}\label{def:TMS}  (\cite[4.1]{CHWW})
A {\it toric monoid scheme} is a separated, connected, torsionfree,
normal monoid scheme of finite type. 
\end{defn}

Recall that a {\it fan} consists of a free abelian group
$N$ of finite rank, together with a finite partially ordered set
$\Delta$ of {\it cones} (strongly convex rational cones $\sigma$
in $N_{\R}$); we refer the reader to \cite{Fulton} for details.

There is an essentially surjective, faithful functor $X$
from fans to toric monoid schemes, see \cite[4.2]{CHWW}. To describe it, we recall that the information of a monoid scheme $X$ can be encoded in a partially ordered set $(X,\leq)$ (the set of points of $X$, ordered by specialization) together with a {\it stalk functor} $A$ from $(X,\leq)$ to monoids; see \cite[Prop.\,2.11]{CHWW}.

\begin{defn}\label{fan-monoid-scheme}
Given a fan $(N,\Delta)$, we  define a contravariant functor $A$
from the poset $\Delta$ to monoids (written additively).
Set $M=\Hom(N,\Z)$.  
For each cone $\sigma $ in $\Delta$, form the monoid (pointed by adding a base point $\ast$):
\[
A(\sigma) = (\sigma^\vee\cap M)_\ast, \quad \textrm{where} \quad
\sigma^\vee = \{m\in M_\R: m(\sigma)\ge0\}.
\]
If $\tau$ is a face of $\sigma$, there is $m\in A(\sigma)$
  such that  $A(\tau)$ is obtained from $A(\sigma)$ by inverting $m$.
  This implies that there is a prime ideal $\p(\tau)$  of $A(\sigma)$
such that  $A(\tau)=A(\sigma)_{\p(\tau)}$.
As observed in \cite[4.2]{CHWW}, $A$ is the stalk functor of
a toric monoid scheme $X(N,\Delta)$;
when $N$ is understood, we just write $X(\Delta)$.
\end{defn}

\begin{rem}\label{rem:smooth-monoid-scheme}
	A monoid scheme $X$ is {\it smooth} if it is cancellative and its $k$-realization $X_k$ is a smooth $k$-scheme for all fields $k$. As discussed in \cite[Sec.\,6]{CHWW}, this is equivalent to the property that every stalk
	 $\mathcal{O}_{X,x}$ is a smash product of a pointed free monoid and a pointed free abelian group. In particular, if $X$ is affine, then $X\cong \A^s\times \G_m^t$ for some $s,t\geq 0$. 
\end{rem}

Given a monoid scheme $X$, an {\it equivariant closed subscheme} of $X$ is a closed subset $Y\subseteq X$ together with
 a sheaf of monoids $\mathcal{O}_Y$ that is a quotient of $\mathcal{O}_X$ by a quasi-coherent sheaf of monoid ideals.

\begin{ex}
	Let $X = X(\Delta, N)$ be a toric monoid scheme. The reduced and irreducible equivariant closed subschemes of $X$ are in one-to-one correspondence with the cones of $\Delta$. If $k$ is a field, and $Y\subseteq X$ is a reduced and irreducible equivariant closed subscheme, then $Y_k$ is an orbit closure in the toric $k$-variety $X_k$.
\end{ex} 

We conclude this section by recalling the notion of a {\it blow-up} of
monoid schemes. See \cite[Section 7]{CHWW} for a more detailed
discussion of projective morphisms of monoid schemes and blow-ups.

\begin{defn}\label{def:blowup}
Let $X$ be a monoid scheme of finite type and $Y \subseteq X$ an
equivariant closed subscheme, given by a quasi-coherent sheaf of monoid
ideals $\cI$. The {\it blow-up} of $X$ along $Y$ is the
projective morphism $X_Y=\MProj(\mathcal{O}_X[\cI t])\to X$.
If $k$ is a field, then $(X_Y)_k\to X_k$ is the
blow-up of the $k$-scheme $X_k$ along the closed subscheme $Y_k$.
\end{defn}

\section{$K'$ of monoid schemes and their $k$--realizations}
\label{sec:higher-K}

We now turn to the $K$-theory of pctf monoid schemes.
There are two definitions in the literature:
$K'(X)$, which is defined to be the $K$-theory of
the quasi-exact category of sheaves of finitely
generated $\pc$ $\mathcal{O}_X$-Sets; and $\KQ(X)$,
which is the $K$-theory
of the category of sheaves of projective sets; see \cite{HW,ELY}.  In
both cases, $K$-theory is defined as a connective spectrum using the
Q-construction of \cite{Quillen}, and we write $K'_n(X)$ for $\pi_n
K'(X)$, resp.\, $\KQ_n(X)$ for $\pi_n \KQ(X)$.

We begin by considering the $K'$-theory. The group $K'_0(X)$ has the following
elementary description  as a Grothendieck group, see \cite{HW}.

\begin{defn}
The group $K'_0(X)$ may be defined as a Grothendieck group: the abelian
group generated by the classes $[Z]$ of sheaves of finitely generated
$\pc$-sets, modulo the relations
that $[Z]=]Y]+[Z/Y]$ for every extension $Z\subset Y$;
	see \cite[2.1]{HW}
\end{defn}

\begin{ex}\label{ex:proj-space-K}
  $K'_0$ of the projective space $\bbP^n$ was computed in \cite[5.7]{HW}:
  $K'_0(\bbP^n)\cong \Z^{n+1}$ on the classes of the structure sheaves
  of the closed	subvarieties $\bbP^i$ ($i=0,...,n$). 
  It follows that the canonical map (induced by realization)
  $K'_0(\bbP^n) \to K'_0(\bbP^n_k)$ is an isomorphism for any field $k$.
\end{ex}

Recall from \cite[5.1.2]{HW} that $K'$ is contravariantly functorial
for open immersions, and covariantly functorial for equivariant closed
immersions. We will need the following result, taken from
\cite[Thm. 5.3]{HW}. In {\it loc.\,cit.\,}, this is proved using
Devissage \cite[Thm. 3.2]{HW} and the localization theorem of Campbell
and Zakharevich for ACGW categories \cite[8.5]{CZ}. A more streamlined
proof along the lines of Quillen's localization theorem in
\cite[Section 7]{Quillen} replaces the use of ACGW categories with
arguments purely within the world of (regular) quasi-exact categories,
see \cite[Section 4]{CW}.

\begin{thm}\label{thm:K-loc}
	Let $X$ be a pc monoid scheme and $Z\map{i} X$ an equivariant closed subscheme
	with open complement $U\map{j} X$. Then there is a fibration sequence of connective spectra
	\[
	K'(Z)\map{i_*} K'(X) \map{j^*} K'(U).
	\]
In particular, the map $K'_0(X)\to K'_0(U)$ is onto. 
\end{thm}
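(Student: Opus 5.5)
Theorem \ref{thm:K-loc} is quoted from \cite[Thm.\,5.3]{HW}, so the main fibration sequence may simply be cited. Still, I would organize a self-contained argument along Quillen's proof of localization for abelian categories \cite[Section 7]{Quillen}, working inside regular quasi-exact categories as in \cite[Section 4]{CW}.

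Let $\mathcal{C}_X$ be the quasi-exact category of finitely generated pc $\mathcal{O}_X$-sets. Let $\mathcal{C}_X^Z\subseteq\mathcal{C}_X$ be the full subcategory of sheaves supported on $Z$, i.e.\ those $\cF$ with $j^*\cF=\ast$.

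\emph{Step 1 (d\'evissage).} Every object of $\mathcal{C}^Z_X$ has a finite admissible filtration whose subquotients are $\mathcal{O}_Z$-sets. For this I would use the filtration $\cF\supseteq \cI\cF\supseteq \cI^2\cF\supseteq\cdots$, where $\cI$ is the ideal sheaf of $Z$; it terminates because $\cF$ is finitely generated and supported on $Z$. D\'evissage \cite[Thm.\,3.2]{HW} then gives $K'(Z)\simeq K(\mathcal{C}^Z_X)$.

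\emph{Step 2 (quotient category).} Identify $\mathcal{C}_U$ with the quotient $\mathcal{C}_X/\mathcal{C}^Z_X$. Restriction $j^*$ is exact and kills $\mathcal{C}^Z_X$. Every f.g.\ pc $\mathcal{O}_U$-set extends to a f.g.\ pc $\mathcal{O}_X$-set: take $j_*$ and the subsheaf generated by finitely many local sections, as for schemes. Morphisms in $\mathcal{C}_U$ are computed by the usual calculus of fractions modulo $Z$-supported sub- and quotient objects.

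\emph{Step 3 (fibration).} Apply the quasi-exact analogue of Quillen's localization theorem (\cite[Section 4]{CW}) to the Serre-type subcategory $\mathcal{C}^Z_X$. This yields the homotopy fibration $K(\mathcal{C}^Z_X)\to K(\mathcal{C}_X)\to K(\mathcal{C}_U)$, which combined with Step 1 is the claimed sequence. The main obstacle is exactly this step: non-additive quasi-exact categories lack kernels of arbitrary maps, so Quillen's Theorem A argument on the fibres of $Qj^*$ must be redone using only admissible monos and epis. In particular one must check that $\mathcal{C}^Z_X$ is closed under admissible subobjects, quotients and extensions, and that the fibre categories are contractible. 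I would first try the filtered-colimit-over-extensions argument of \cite[Section 7]{Quillen}, using pc hypotheses to guarantee the needed cokernels exist.

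\emph{Surjectivity on $K'_0$.} This follows from the long exact sequence of homotopy groups, since $\pi_{-1}K'(Z)=0$ for connective spectra. It also follows directly from the extension property in Step 2, since the classes $[\cF|_U]$ generate $K'_0(U)$.
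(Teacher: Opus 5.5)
Your proposal is correct and follows essentially the route the paper indicates. D\'evissage \cite[Thm.\,3.2]{HW} identifies $K'(Z)$ with the $K$-theory of pc $\mathcal{O}_X$-sets supported on $Z$; a Quillen-style localization theorem for regular quasi-exact categories as in \cite[Section 4]{CW} then yields the fibration (this is the paper's streamlined alternative to the Campbell--Zakharevich ACGW localization used in \cite{HW}), and surjectivity on $K'_0$ follows from connectivity.

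The one step you state too quickly is the extension in Step 2, because $j_*$ of a pc sheaf need not be pc. Take $A$ the free pointed monoid on $x,y$ modulo the ideal $(xy)$, $Z$ the closed point, and $U=D(x)\amalg D(y)$. Then $x\cdot(1,1)=x\cdot(1,0)\neq 0$ in $j_*\mathcal{O}_U(X)=A_x\times A_y$. So, unlike for schemes, the finitely many generating sections must be chosen so that the subsheaf they generate is pc: here $(1,0),(0,1)$ work, but $(1,1)$ alone does not.
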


\begin{cor}\label{cor:K'-MV}
  Let $X$ be a pc monoid scheme and $U,\,V\subseteq X$
  open subschemes covering $X$. 
	Then there is a long exact Mayer--Vietoris sequence
\[
\dotsm \to K'_n(X)\to K'_n(U)\oplus K'_n(V)\to K'_n(U\cap V) \to K'_{n-1}(X)\to \dotsm
\]
\end{cor}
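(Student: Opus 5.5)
The plan is to derive the Mayer--Vietoris sequence from the localization fibration of Theorem \ref{thm:K-loc}, applied twice, together with an excision comparison. Set $Z = X\setminus U$. The first issue is that $Z$ must carry the structure of an equivariant closed subscheme. For a pc monoid scheme, the complement of an open subscheme is a closed subset, and we give it the reduced structure, i.e.\ the quotient of $\mathcal{O}_X$ by the sheaf of monoid ideals of sections vanishing on $Z$. Locally on an affine $\MSpec(A)$, an open subset is a union of basic opens $D(f)$, and its complement is $V(I)$ for a monoid ideal $I$. So $Z$ is an equivariant closed subscheme, and we get a fibration $K'(Z)\to K'(X)\to K'(U)$. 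Note that $Z\subseteq V$ because $U\cup V=X$. Moreover $Z$ is closed in $V$ with complement $U\cap V$, and its reduced structure as a closed subscheme of $V$ agrees with its reduced structure as a closed subscheme of $X$. This gives a second fibration $K'(Z)\to K'(V)\to K'(U\cap V)$.

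Next, I check that restriction along $V\subseteq X$ gives a map from the first fibration to the second. This map should be the identity on $K'(Z)$ and the restrictions $K'(X)\to K'(V)$ and $K'(U)\to K'(U\cap V)$ on the other terms. To see this, I use the naturality of $i_*$ and $j^*$ established in \cite[5.1.2]{HW}. Pushforward along a closed immersion commutes with restriction to an open subscheme, since for a sheaf $\cF$ on $Z$, $(i_*\cF)|_V = i'_*(\cF|_{Z\cap V})$ and $Z\cap V = Z$. Restrictions also compose. So the squares commute, at least up to natural isomorphism of exact functors, which is enough after applying the Q-construction. This gives a commutative diagram of horizontal fibration sequences of spectra:
\[
\begin{CD}
K'(Z) @>>> K'(X) @>>> K'(U)\\
@| @VVV @VVV\\
K'(Z) @>>> K'(V) @>>> K'(U\cap V).
\end{CD}
\]

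Finally, because the left vertical map is an equivalence, the right-hand square is homotopy cartesian. Equivalently, the induced map on the homotopy cofibres of the vertical maps is an equivalence. The long exact sequence of homotopy groups of a homotopy cartesian square of spectra is exactly the Mayer--Vietoris sequence
\[
\dotsm\to K'_n(X)\to K'_n(U)\oplus K'_n(V)\to K'_n(U\cap V)\to K'_{n-1}(X)\to\dotsm,
\]
where the middle map is the difference of the two restrictions. Alternatively, one can run the standard algebraic diagram chase on the two long exact localization sequences (the Barratt--Whitehead lemma), since their $K'_*(Z)$ terms are identified.

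I expect the main obstacle to be the first step together with the compatibility of the two localization sequences. Concretely, I need to confirm that the same $K'(Z)$ appears in both fibrations and that the squares commute at the level of spectra, not merely on homotopy groups, so that the homotopy-cartesian argument applies. There is a fallback if only commutativity on homotopy groups is available: the Barratt--Whitehead lemma needs only commutative ladders of long exact sequences with the third terms isomorphic, so it still yields the exact sequence.
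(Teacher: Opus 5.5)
Your proposal is correct and takes the same route as the paper, which cites this as the standard argument (Brown--Gersten; the $K$-book): apply the localization fibration of Theorem \ref{thm:K-loc} to $Z=X\setminus U$ once inside $X$ and once inside $V$, and use the common $K'(Z)$ term to conclude that the square of restrictions is homotopy cartesian. Your check that $i_*$ and restriction to $V$ commute up to natural isomorphism is exactly what makes the two localization sequences into a map of fibration sequences, so the long exact Mayer--Vietoris sequence follows.
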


\begin{proof}
  This is a standard argument, see for example
  \cite[Prop.\,4]{BG} or \cite[V.6.11.2]{WK}.
\end{proof}

\begin{rem}
We also have $K'_0(X\times\Gm)\simeq K'_0(X)$; see \cite[5.4]{HW}.
\end{rem}

We are now in position to prove that the $K'$-theory of a toric
variety can be computed from that of the underlying monoid
scheme. Recall that, if $Y$ is a $k$-scheme, then the $K'$-theory
spectrum $K'(Y)$ is a module over the $E_\infty$-ring spectrum
$K(k)$. It follows from this that, for a monoid scheme $X$, there is a
natural map of spectra $K(k)\wedge K' (X)\to K'(X_k)$ given as the
composite of the $k$-realization with the action map.

\begin{thm}\label{thm:comparison}
Let $X$ be a toric monoid scheme, and $k$ a field.
Then the natural map of spectra 
\[ K(k)\wedge K' (X)\xrightarrow{\sim} K'(X_k) \]
is a weak homotopy equivalence. 
In particular, the realization map is an isomorphism:
\[ K'_0(X)\xrightarrow{\sim} K'_0(X_k). \]
\end{thm}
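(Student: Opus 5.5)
We argue by induction on the number of torus orbits of $X$, i.e.\ on the number of cones of the fan $\Delta$ with $X=X(\Delta)$. Both sides of the map are compatible with localization. On the monoid side we have the fibration $K'(Z)\to K'(X)\to K'(U)$ of Theorem \ref{thm:K-loc}. Smashing with $K(k)$ preserves cofibration sequences of spectra, so $K(k)\wedge K'(Z)\to K(k)\wedge K'(X)\to K(k)\wedge K'(U)$ is again a cofiber sequence. On the scheme side, Quillen's localization theorem gives $K'(Z_k)\to K'(X_k)\to K'(U_k)$. Realization commutes with $i_*$ (exact pushforward along an equivariant closed immersion) and with $j^*$ (restriction to the open $U_k$). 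The comparison maps are built from realization and the $K(k)$-action, so they give a map of cofiber sequences. By the five lemma on homotopy groups, it then suffices to prove the equivalence for $Z$ and $U$.

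We choose the decomposition as follows. Pick a maximal cone $\sigma$ of $\Delta$ and let $Z=V(\sigma)$ be the corresponding closed orbit, i.e.\ the reduced equivariant closed subscheme attached to $\sigma$. Then $U=X(\Delta\setminus\{\sigma\})$ is again a toric monoid scheme, with one fewer orbit. Since $\sigma$ is maximal, $Z$ is a single orbit, so $Z\cong \Gm^t$ (with its reduced structure) for $t=\operatorname{rank}N-\dim\sigma$. By induction the claim holds for $U$, so we are reduced to the torus case $X=\Gm^t$. For $t=0$ this is the base case $X=\pt$, where $K'(\pt)$ is the $K$-theory of finite pointed sets. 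By Barratt--Priddy--Quillen this is the sphere spectrum $\mathbb S$, and $K(k)\wedge\mathbb S\simeq K(k)$, so the base case holds.

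For $\Gm^t$ with $t>0$ we induct on $t$ using $\Gm^t=\Gm^{t-1}\times\Gm$. We write $\A^1_Y\supset \{0\}\times Y$ with complement $Y\times\Gm$, where $Y=\Gm^{t-1}$, and apply localization (Theorem \ref{thm:K-loc}) to this decomposition on both sides. This reduces the claim to the one for $Y\times\A^1$. Here we need homotopy invariance $K'(Y\times\A^1)\simeq K'(Y)$ on both sides. For schemes this is Quillen's theorem. For monoid schemes I would try to prove it using the $\N$-grading: a pc $\mathcal O_Y[t]$-set is a graded object, and we compare via the filtration by degree, or via the Quillen-style characteristic sequence argument available in the quasi-exact setting of \cite{CW}. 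Alternatively, and more cheaply, one can bypass $\A^1$. The localization sequence for $\{0\}\subset\A^1$ over $\pt$, combined with the known $K'$ of $\pt$ and the identification $K'(\A^1)\simeq K'(\pt)$, gives $K'(\Gm)\simeq \mathbb S\vee\Sigma\mathbb S$, matching $K'(\Gm{}_k)\simeq K(k)\vee\Sigma K(k)$. A Künneth-type splitting then handles products.

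The main obstacle is the $\A^1$-homotopy invariance (or equivalently the fundamental theorem $K'(Y\times\Gm)\simeq K'(Y)\vee\Sigma K'(Y)$) for pc monoid schemes. We also need to check that the realization map is compatible with the boundary maps, so that the comparison is really a map of fibration sequences. Once these are in place, the $K'_0$ statement follows immediately. Indeed, $\pi_0(K(k)\wedge K'(X))=\pi_0K(k)\otimes\pi_0K'(X)=K'_0(X)$ because both spectra are connective and $K_0(k)=\Z$. This recovers Example \ref{ex:proj-space-K} as a special case.
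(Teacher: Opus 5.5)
Your proof is correct and is essentially the paper's argument: localization (Theorem \ref{thm:K-loc}) and the five lemma strip off closed strata down to the open torus, the torus is handled by $\A^1$-invariance on the monoid side together with Quillen's Fundamental Theorem on the scheme side, and the $K'_0$ statement follows from connectivity. The only variation is that you remove one closed orbit $O(\sigma)\cong\Gm^t$ of a maximal cone at a time, so every closed stratum is a torus, whereas the paper filters by opens whose closed strata are lower-dimensional toric monoid schemes handled by induction on dimension.

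What you call the main obstacle is already in the literature: $K'(A)\simeq K'(A\wedge\N_+)$ for any pc monoid $A$ is \cite[Thm.\,4.6]{HW} (Lemma \ref{lem:K-htpy-invariance}), which is exactly what the paper cites for the torus case. So your induction on $t$ goes through as stated, and the K\"unneth-type alternative is unnecessary; it also still uses $K'(\A^1)\simeq K'(\pt)$, so it does not actually bypass $\A^1$.
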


\begin{proof}
  We proceed by induction on the dimension $n$ of $X$.
  If $n = 0$, then $X$ is a point, and $K'(X)$ is the
  sphere spectrum $\mathbb{S}$. On the other hand,
  $X_k = \Spec (k)$, so $K'(X_k) = K'(k)$.
  The asserted equivalence follows. 
	
  Now assume that $n > 0$, and we have proved the assertion for all
  toric monoid schemes of dimension less than $n$. Since $X$ is a
  toric monoid scheme, there is a sequence of open subschemes 
\[
\MSpec(\Z_+^{\wedge t}) \cong X_0 \subset X_1 \subset \dotsm \subset X_t = X
\]
such that, for $0 < i\leq t$, $Z_i = X_i \setminus X_{i-1}$ is a toric
monoid scheme of dimension $i-1$ embedded as an equivariant closed
subscheme into $X_i$. The assertion of the theorem is true for $X_0$
by \cite[Thm.\,4.6]{HW} and the Fundamental Theorem of $K'$-theory
(see \cite[Section 6, Thm.\,8]{Quillen}). It also holds for the monoid
schemes $Z_i$ by the inductive hypothesis. Now it follows for $X_i$,
$i > 0$, using induction on $i$ and Theorem \ref{thm:K-loc}.
	
	The realization map $K'_0(X)\cong K'_0(X)\otimes_\Z K_0(k)\to K'_0(X_k)$ is an edge map in the K\"unneth spectral
	sequence (see \cite{Adams69}) converging to $K'_*(X_k)\cong \pi_*\left(K(k)\wedge K' (X)\right)$. Since all spectra involved are connective, the map is an isomorphism.
\end{proof}

\begin{rem}\label{rem:higher-comparison}
The statement of Theorem \ref{thm:comparison} applies
to general pc monoid schemes $X$, using localization and devissage.
We omit the details here, as we do not need that level of generality. 
\end{rem}

\section{The $K$-theory of monoid schemes.}\label{sec:Kzar}

In Theorem \ref{thm:K-of-sm-proj}, we will determine the $K'$-theory
of a smooth proper toric monoid scheme. Along the way, we establish
projective bundle and blow-up formulas. To formulate and prove these
results, we need to identify $K'(X)$ with an appropriate $K$-theory
spectrum of $X$ that extends to a presheaf on toric monoid
schemes. Unfortunately, the definition of $\KQ(X)$ in \cite{ELY} (as
the $K$-theory of the category of sheaves of projective
$\mathcal{O}_X$-sets) does not satisfy Zariski descent, and fails to
be isomorphic to $K'(X)$ for general smooth toric monoid schemes. (For
an example of this failure, compare the calculation in \cite[Ex
  3.20]{ELY} with our Example \ref{ex:proj-space-K}.) To fix this, we
introduce a new functor $\Kzar$, defined by sheafifying the presheaf
$\KQ$ in the Zariski topology.

Recall from \cite{CHWW-p} that  the category of toric monoid schemes is a subcategory of the category $\cMpctf$ of
partially cancellative, torsionfree monoid schemes of finite type. 

As described for example in \cite[Sec.\,12]{CHWW} and \cite[Sec.\,5]{CHWW-p}, there is a model category structure on the
category of presheaves of spectra on $\cMpctf$ whose weak equivalences are those maps inducing isomorphisms between the (Zariski) sheaves of stable homotopy groups. Given a presheaf of spectra $\mathcal{E}$, we write $\mathcal{E}\to\bbH_{\zar}(-, \mathcal{E})$ for its fibrant replacement in this model category structure. We say that $\mathcal{E}$ {\it satisfies Zariski descent} if, for every $X\in \cMpctf$, the map of spectra 
\[\mathcal{E}(X)\xrightarrow{\sim} \bbH_{\zar}(X, \mathcal{E})\] 
is a weak homotopy equivalence. 

An analogous model category structure exists for the category of presheaves $\mathcal{E}$ of spectra on the small Zariski site of a fixed monoid scheme $X$. We again write $\mathcal{E}\to \bbH_{\zar}(-, \mathcal{E})$ for the fibrant replacement in this model category structure. Since the restriction of a fibrant presheaf from $\cMpctf$ to $X_\zar$ is fibrant, this does not create any confusion. We say that $\mathcal{E}$ satisfies descent on $X_\zar$ if, for all $U\subseteq X$ open, the map $\mathcal{E}(U)\to \bbH_{\zar}(U, \mathcal{E})$ is a weak homotopy equivalence.

\begin{lem}\label{lem:K'-descent}
	Let $X\in\cMpctf$. Then the presheaf $K'$ on $X_\zar$ satisfies descent. 
\end{lem}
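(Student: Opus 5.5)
The plan is the standard Thomason--Trobaugh criterion: a presheaf of spectra on a Noetherian Zariski site satisfies descent as soon as it sends the empty scheme to a contractible spectrum and turns every elementary Zariski square into a homotopy cartesian square. Since $X$ is of finite type, the underlying topological space of $X$ is finite, so $X_\zar$ is a Noetherian site of finite Krull dimension. For such a site the Brown--Gersten argument (as in \cite{BG}, or \cite[Sec.\,12]{CHWW}) shows that this Mayer--Vietoris property suffices. We therefore need to check two things for $K'$ restricted to $X_\zar$:
\begin{enumerate}
\item $K'(\emptyset)\simeq *$;
\item for every open $W\subseteq X$ and open cover $W=U\cup V$, the square with corners $K'(W)$, $K'(U)$, $K'(V)$, $K'(U\cap V)$ is homotopy cartesian.
\end{enumerate}
The first is clear, since the category of sheaves of pc sets on the empty scheme is trivial.

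For the second, I would deduce it from the localization Theorem \ref{thm:K-loc}, exactly as in the proof of Corollary \ref{cor:K'-MV}, but at the level of spectra rather than homotopy groups. The steps are as follows.
\begin{enumerate}
\item Let $Z=W\setminus V$, equipped with its reduced closed subscheme structure. Note that $Z$ is contained in $U$, and that $Z$ is closed in $U$ with complement $U\cap V$.
\item Theorem \ref{thm:K-loc} gives fibration sequences
\[
K'(Z)\to K'(W)\to K'(V) \quad\text{and}\quad K'(Z)\to K'(U)\to K'(U\cap V).
\]
\item Restriction along $U\subseteq W$ induces a map from the first sequence to the second. This map is the identity on the fibre term $K'(Z)$, because pushforward along the closed immersion commutes with restriction to the open subscheme $U\supseteq Z$.
\item A map of fibration sequences of spectra that is an equivalence on fibres has homotopy cartesian right-hand square. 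This is exactly the Mayer--Vietoris square.
\end{enumerate}

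The main obstacle is step 1. Theorem \ref{thm:K-loc} is stated for an \emph{equivariant} closed subscheme, meaning one defined by a quasi-coherent sheaf of monoid ideals, so we must make sure that the complement $Z$ of an arbitrary open subscheme carries such a structure. In a monoid scheme, the complement of an open subset of an affine $\MSpec(A)$ is a union of closed sets $V(\p)$, where $\p$ ranges over prime ideals. The ideal sheaf of elements vanishing on $Z$ (the intersection of the corresponding primes) is a quasi-coherent sheaf of monoid ideals, so $Z$ does carry such a structure. I would verify this locally and glue, using the finiteness of $X$.

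A second point to check is compatibility in step 3: that $i_*$ commutes with restriction to the open subscheme $U$ on the level of quasi-exact categories. This holds since both operations are computed stalkwise. Once these are in place, the Brown--Gersten theorem yields $K'(W)\simeq \bbH_\zar(W,K')$ for all open $W\subseteq X$.
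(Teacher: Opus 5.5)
Your proof is correct. Its core is the same as the paper's: both derive Mayer--Vietoris for $K'$ from the localization Theorem~\ref{thm:K-loc}, and your steps 1--4 are exactly the ``standard argument'' behind Corollary~\ref{cor:K'-MV}. The difference lies in how you get from Mayer--Vietoris to descent.

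The paper takes the affine case from \cite[Lemma 5.2]{CHWW-p}. For monoid schemes this case is automatic: $\MSpec(A)$ has the unique closed point $A\setminus A^\times$, so every Zariski cover of an affine contains the whole scheme. The paper then reaches arbitrary $X$ using Corollary~\ref{cor:K'-MV}, i.e.\ by inducting over a finite affine cover and comparing $K'$ with $\bbH_\zar(-,K')$.

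You instead apply the Brown--Gersten theorem on the finite space underlying $X$, which needs no separate affine input.

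What each buys: the paper's route is more elementary and exploits the special local nature of affine monoid schemes. Yours uses heavier but standard machinery and does not depend on that feature. Your spectrum-level formulation (homotopy cartesian squares rather than long exact sequences) is precisely the hypothesis Brown--Gersten requires. You also make explicit a point the paper leaves implicit: the closed set $W\setminus V$, with its reduced structure, is an equivariant closed subscheme, so Theorem~\ref{thm:K-loc} applies to it.
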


\begin{proof}
	By \cite[Lemma 5.2]{CHWW-p}, this is true when $X$ is affine. Now the general case follows using Corollary \ref{cor:K'-MV}.
\end{proof}

\begin{defn}\label{defn:K-Zar}
Let $\KQ$ be the presheaf of $K$-theory spectra on $\cMpctf$ 
defined in \cite[Section 3.4]{ELY}.
We write $\Kzar$ for the presheaf $\bbH_{\zar}(-, \KQ)$. For $n\in\Z$ we set
$\Kzar_n(X) = \bbH_{\zar}^{-n}(X, \KQ).$
\end{defn}

By construction, there is a natural transformation $\KQ\to \Kzar$.
Here is another application of \cite[Lemma 5.2]{CHWW-p}:

\begin{lem}\label{lem:affine-equivalence}
Let $U\in \cMpctf$ be affine. Then $\KQ(U)\rightarrow \Kzar (U)$
is a weak homotopy equivalence.
Therefore $\KQ_n(U)\xrightarrow{\sim} \Kzar_n(U)$
is an isomorphism for all $n$.
\end{lem}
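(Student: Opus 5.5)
The plan is to reduce the lemma to the fact that $\KQ$ satisfies descent on the small Zariski site of an affine monoid scheme, which is the content of \cite[Lemma 5.2]{CHWW-p}. The key structural input is that an affine monoid scheme $U=\MSpec(A)$ has a unique closed point $\mathfrak m$: the maximal ideal of non-units of $A$ is the unique maximal ideal. Every open subscheme containing $\mathfrak m$ is therefore all of $U$. Consequently any Zariski open cover of $U$ contains $U$ itself as a member. Equivalently, $U$ is its own local scheme at its closed point, and the stalk of any presheaf at $\mathfrak m$ is just its value on $U$.

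First, I recall that $\Kzar=\bbH_{\zar}(-,\KQ)$ is the fibrant replacement in the local model structure, and that the map $\KQ\to\Kzar$ is a stalkwise weak equivalence. That is, for each point $x$ it induces isomorphisms $\varinjlim_{x\in V}\pi_n\KQ(V)\to\varinjlim_{x\in V}\pi_n\Kzar(V)$. Second, I evaluate at the closed point $\mathfrak m$ of $U$. The only open neighbourhood of $\mathfrak m$ in $U$ is $U$ itself, so both colimits are indexed by a one-object category, and the stalk map is exactly $\pi_n\KQ(U)\to\pi_n\Kzar(U)$. Hence it is an isomorphism for all $n$.

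For this to be legitimate, the stalk must be computed on the small site of $U$ rather than the big site $\cMpctf$. This is addressed by the remark in the text that the restriction of a fibrant presheaf to $X_\zar$ is fibrant, so $\Kzar|_{U_\zar}$ is a fibrant replacement of $\KQ|_{U_\zar}$ there. Points of the big Zariski topos over $U$ are local monoid schemes over $U$, and for the object $U$ itself one only needs the point $\mathfrak m$.

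The main subtlety is the step identifying $\pi_n\Kzar(U)$ with a stalk. A fibrant object's value on $U$ equals its global sections, and I need to know that hypercohomology on a space with a unique closed point is just evaluation at that point. I would argue this directly: the global-sections functor on $U_\zar$ coincides with the stalk functor at $\mathfrak m$, which is exact, so $\bbH_{\zar}(U,\mathcal E)\simeq\mathcal E_{\mathfrak m}=\mathcal E(U)$ for any presheaf $\mathcal E$. Applied to $\mathcal E=\KQ$, this gives the weak equivalence $\KQ(U)\simeq\Kzar(U)$, and the isomorphisms $\KQ_n(U)\cong\Kzar_n(U)$ follow by taking $\pi_n$, using Definition \ref{defn:K-Zar}. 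This argument is really the content of \cite[Lemma 5.2]{CHWW-p}, which I would cite for the general statement that any presheaf of spectra on $\cMpctf$ satisfies descent on affines.
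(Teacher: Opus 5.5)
Your proposal is correct and is essentially the paper's argument: the paper simply cites \cite[Lemma 5.2]{CHWW-p}, and the content of that lemma is your observation that $\MSpec(A)$ has a unique closed point (the ideal of non-units), so $U$ is its only open neighbourhood and the stalkwise equivalence $\KQ\to\Kzar$ at that point is literally the map $\KQ(U)\to\Kzar(U)$. Your second paragraph already completes the proof; the small-site reduction needs only that Zariski covers of $U$ in $\cMpctf$ are open covers of $U$, so restriction to $U_\zar$ preserves local weak equivalences (fibrancy is not the relevant point), and the hypercohomology paragraph is a redundant second route to the same conclusion.
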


\begin{lem}\label{lem:K-htpy-invariance}
	If $A$ is a pc monoid, then the natural maps
	 \[\KQ(A)\to \KQ(A\wedge \N_+)\quad\text{and}\quad K'(A)\to K'(A\wedge \N_+)\]
	 are weak homotopy equivalences. 
\end{lem}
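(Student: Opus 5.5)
The plan is to treat the two maps separately. The one for $K'$ follows from the localization sequence, Theorem \ref{thm:K-loc}, together with a $\mathbb{G}_m$-type argument. The one for $\KQ$ follows from the structure of projective modules over a pc monoid and the elementary homotopy invariance available in \cite{ELY}.

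For $K'$, set $B = A\wedge \N_+$ and $X=\MSpec(B)$. The element $t\in\N_+$ cuts out the equivariant closed subscheme $Z=\MSpec(A)$, and the open complement is $U=\MSpec(A\wedge \Z_+)$, where $\Z_+$ is the pointed free abelian group on one generator. Theorem \ref{thm:K-loc} then gives a fibration
\[ K'(A)\map{i_*} K'(A\wedge\N_+)\map{j^*} K'(A\wedge \Z_+). \]
There is a second natural fibration of the same shape: the inclusion of $B$-sets annihilated by $t$ and the quotient by $t$-torsion. I would identify $K'(A\wedge\Z_+)\simeq K'(A)\vee \Sigma K'(A)$ by a Fundamental Theorem argument of the type quoted in the proof of Theorem \ref{thm:comparison}, namely \cite[Section 6, Thm.\,8]{Quillen} as adapted in \cite{HW,CW}. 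The result I am aiming for is that $i_*$ composed with the map "multiplication by $t$" $B\to B$ is null. In the classical proof this holds because every pc $B$-set $F$ sits in a sequence $0\to tF\to F\to F/tF$, so $i_*$ induces zero on $K'$. Consequently the boundary map splits off the $\Sigma K'(A)$ summand, and $j^*$ restricted to the image of the pullback $\pi^*\colon K'(A)\to K'(A\wedge\N_+)$ picks out the $K'(A)$ summand. Comparing long exact sequences then shows $\pi^*$ is an equivalence.

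Alternatively, I would run Quillen's original homotopy-invariance proof directly. The plan there is to filter $\N_+$-graded pc $B$-sets, use the fact that $\pi^*$ is exact because $B$ is free over $A$ as a pointed set, and apply the characteristic sequence $0\to F(-1)\to F\to F/tF\to 0$ in the graded category. This reduces the claim to the resolution and devissage theorems for quasi-exact categories in \cite[Section 4]{CW}.

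For $\KQ$, projective $B$-sets are wedges of free pointed $B$-sets. Each is therefore extended from a projective $A$-set, with isomorphism classes and automorphisms read off as in \cite{ELY}. Since $\KQ$ is the group completion (or $Q$-construction) of a category of finitely generated projective sets, I would show that $P\mapsto P\wedge\N_+$ induces an equivalence on the relevant categories up to automorphism groups. Concretely, $\operatorname{Aut}_B(B^{\vee r}) = \Sigma_r\wr B^\times$ and $B^\times = A^\times$, because units of $A\wedge\N_+$ have zero $t$-degree. The two symmetric monoidal groupoids are then equivalent, and so are their $K$-theory spectra. The main obstacle is the $K'$ half, specifically establishing the Fundamental Theorem splitting for $A\wedge\Z_+$ with only quasi-exact category tools. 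I would first try to import Quillen's graded argument verbatim through \cite{CW}.
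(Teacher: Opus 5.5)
\textbf{Verdict.} The $\KQ$ half is correct and matches the paper. The $K'$ half has a genuine gap: it never establishes homotopy invariance, which is the actual content of that assertion.

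\textbf{The $\KQ$ half.} Your argument unpacks the paper's. The paper cites \cite[Thm.\,2.6]{ELY} together with $A^\times\cong(A\wedge\N_+)^\times$, which is your observation that $\mathrm{Aut}_B(B^{\vee r})=\Sigma_r\wr B^\times$ with $B^\times=A^\times$. Two points should be made explicit. First, projectives are free because a pc monoid has no idempotents other than $0,1$. Second, passing from the groupoid of projectives to the $Q$-construction spectrum $\KQ$ requires the ``$Q=+$'' comparison for this uniquely split category, which \cite{ELY} supplies. Group completion and the $Q$-construction are not the same by definition.

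\textbf{The $K'$ half.} Your first route is circular. The splitting $K'(A\wedge\Z_+)\simeq K'(A)\vee\Sigma K'(A)$ is the Laurent half of the Fundamental Theorem. In the standard proof (\cite[Section 6, Thm.\,8]{Quillen} and its adaptations), that half is \emph{deduced} from three inputs: localization, the vanishing of $i_*$, and homotopy invariance. The last of these is exactly the statement $K'(A)\simeq K'(A\wedge\N_+)$ you are trying to prove. You give no independent computation of $K'(A\wedge\Z_+)$; such a computation is immediate only in special cases, e.g.\ when $A$ is a pointed group, so that all pc sets are projective.

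Your justification for $i_*\simeq 0$ is also wrong. The sequence $0\to tF\to F\to F/tF\to 0$ for an arbitrary pc $B$-set $F$ says nothing about $i_*$, since $tF\not\cong F$ in general. The correct input is the natural sequence $0\to N\wedge\N_+\xrightarrow{t}N\wedge\N_+\to i_*N\to 0$ for pc $A$-sets $N$. This is a short exact sequence of exact functors $\pi^*\to\pi^*\to i_*$, and additivity then gives $i_*\simeq 0$.

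Your second route is only a statement of intent. The substance of Quillen's argument is not carried out:
\begin{enumerate}
\item the step relating graded objects to ordinary $B$-sets;
\item the resolution step, showing that every graded object has a finite resolution by shifted extended objects $N\wedge\N_+$. For modules this rests on a Tor-dimension argument that has no off-the-shelf analogue for pc sets.
\end{enumerate}
Moreover, the characteristic sequence $0\to F(-1)\to F\to F/tF\to 0$ exists only when $t$ acts injectively, for instance on extended objects, not for arbitrary $F$.

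The missing ingredient is precisely the homotopy invariance theorem \cite[Thm.\,4.6]{HW}, which is all the paper uses for the $K'$ assertion. You need either to cite it or to actually prove it in the quasi-exact setting.
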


\begin{proof}
	The first assertion follows from \cite[Thm.\,2.6]{ELY} since the natural homomorphism of units $A^\times \to \left(A\wedge \N_+\right)^\times$ is an isomorphism. The second assertion is \cite[Thm.\,4.6]{HW}.
\end{proof}
 For a pctf monoid scheme $X$, consider the map
$\KQ(X)\to K'(X)$ induced by the inclusion of the category of
sheaves of projective $\mathcal{O}_X$-sets into that of
sheaves of pc $\mathcal{O}_X$-sets. We claim that this map factors through a natural map $\Kzar(X)\to K'(X)$
up to a canonical weak homotopy equivalence. Indeed, $K'$
satisfies Zariski descent on $X$ by Lemma \ref{lem:K'-descent}. Thus, $\KQ(X)\to K'(X)$
factors as
\[  \KQ(X)\to \Kzar(X) = \bbH_{\zar}(X, \KQ) \to \bbH_{\zar}(X, K')\simeq K'(X).   \]

\begin{thm}\label{thm:K-vs-K'}
	Let $X$ be a smooth monoid scheme. Then the natural map
 $\Kzar (X)\xrightarrow{\sim} K'(X)$ is a weak homotopy equivalence.
\end{thm}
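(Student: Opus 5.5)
Both $\Kzar$ and $K'$ satisfy Zariski descent on $X_\zar$: the former by construction as a fibrant replacement, the latter by Lemma \ref{lem:K'-descent}. The map $\Kzar(X)\to K'(X)$ was built as $\bbH_{\zar}(X,\KQ)\to\bbH_{\zar}(X,K')$. It therefore suffices to show that $\KQ\to K'$ induces an isomorphism on Zariski sheaves of stable homotopy groups over $X$. This can be checked on stalks, or equivalently on a basis of affine opens. Since $X$ is smooth, Remark \ref{rem:smooth-monoid-scheme} shows that every point has an affine open neighborhood isomorphic to $\A^s\times\G_m^t$, and these form a basis of the topology. So the theorem reduces to proving that $\KQ(U)\to K'(U)$ is a weak equivalence for $U=\MSpec(A)$ with $A=\N_+^{\wedge s}\wedge (\Z^t)_+$. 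Once this is known, a map of presheaves that is an equivalence on a basis is a local weak equivalence, and hence induces an equivalence of $\bbH_{\zar}(X,-)$.

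For the affine case I would use homotopy invariance. Applying Lemma \ref{lem:K-htpy-invariance} $s$ times, the maps $\KQ((\Z^t)_+)\to\KQ(A)$ and $K'((\Z^t)_+)\to K'(A)$ are weak equivalences. They are compatible with the comparison map $\KQ\to K'$, because the inclusion of projective sets into pc sets commutes with the base change $-\wedge\N_+$. By two-out-of-three, we are reduced to the torus $T=\MSpec((\Z^t)_+)=\G_m^t$.

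The torus case is where I expect the real work. On $\G_m^t$ the monoid is a pointed group $G_+$ with $G=\Z^t$. Finitely generated projective $G_+$-sets are finite wedges of free orbits $G_+$, so $\KQ(T)$ is the $K$-theory of finite based free $G$-sets, namely $\Sigma^\infty_+ BG$ by Barratt--Priddy--Quillen (cf.\ \cite{ELY}). On the $K'$ side, one has to show that every finitely generated pc $G_+$-set on $\G_m^t$ is already a wedge of free orbits. Partial cancellativity forces the action on nonzero elements to be free, and the torsion-free, finitely generated hypotheses should leave no room for other quotients; I would check this directly. If the categories of projective and pc sets then coincide for a group monoid, with admissible monos being wedge summand inclusions in both, the map is an equivalence immediately. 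If that identification turns out to be too delicate, the fallback is to use \cite[Thm.\,4.6]{HW} together with the Fundamental Theorem to compute $K'(\G_m^t)$ inductively as $\mathbb{S}\wedge(\Z^t)_+$-type splittings. One would then compare with the analogous computation of $\KQ(\G_m^t)$ from \cite{ELY}, checking that the comparison map respects the splittings. I expect this compatibility check to be the main obstacle.
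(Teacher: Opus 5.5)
Your proposal is correct and follows essentially the same route as the paper. You reduce via Zariski descent to affine pieces $\MSpec(\N_+^{\wedge s}\wedge\Z_+^{\wedge t})$, use the homotopy-invariance square from Lemma \ref{lem:K-htpy-invariance} to reduce to $s=0$, and then observe that over a pointed group every finitely generated pc set is a finite wedge of free orbits, hence projective, so $\KQ=K'$ there. The paper simply cites \cite[Ex.\,1.4]{HW} for that last point, so the torus case is immediate and your fallback via the Fundamental Theorem and comparison of splittings is unnecessary.
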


\begin{proof}	
	 Assume first that $X = \MSpec (A)$ is affine and smooth. In this case, $\KQ(X)\xrightarrow{\sim} \Kzar(X)$ by Lemma \ref{lem:affine-equivalence}, so we need to show that the map $\KQ(X)\to K'(X)$ is a weak homotopy equivalence. Since $X$ is smooth, it is a product of an affine space and a torus, in other words, $A$ is isomorphic to a smash product $\N_+^{\wedge s} \wedge \Z_+^{\wedge t}$  of a
	free pointed abelian monoid and a free pointed abelian group. Suppose that $s = 0$. Then $A$ is a (pointed) group, and thus, every pc $A$-set is automatically projective (as noted in \cite[Ex.\,1.4]{HW}). It follows that $\KQ(X) = K'(X)$.
	
	 Now suppose $s > 0$. Consider the following commutative square of $K$-theory spectra:
		\begin{equation}\label{cart-square}
		\xymatrix{
			\KQ(\Z_+^{\wedge t})  \ar[r]\ar[d]^{\cong}&
			K'(\Z_+^{\wedge t}) \ar[d]^{\cong} 
			\\
			\KQ(\N_+^{\wedge s} \wedge \Z_+^{\wedge t})  \ar[r]& K'(\N_+^{\wedge s} \wedge \Z_+^{\wedge t}).
		}
	\end{equation}
	We have just observed that the top horizontal map is a weak homotopy equivalence. Since the
	 vertical maps are weak homotopy equivalences by Lemma \ref{lem:K-htpy-invariance}, the bottom horizontal map is also a weak homotopy equivalence. 
	
	The general case now follows by Zariski descent, which holds for $K'$ by Lemma \ref{lem:K'-descent}, and for $\Kzar$ by definition.
\end{proof}	

\begin{cor}\label{cor:K_0}
	Let $X$ be a smooth monoid scheme, and $k$ a field. Then there is a natural isomorphism
	$\Kzar_0(X)\xrightarrow{\sim} K_0(X_k)$.
\end{cor}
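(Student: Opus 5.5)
The plan is to compose three isomorphisms already available in the paper:
\[
\Kzar_0(X)\xrightarrow{\sim} K'_0(X)\xrightarrow{\sim} K'_0(X_k)\xrightarrow{\sim} K_0(X_k).
\]
All three maps are natural in $X$, so the composite is natural.

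\textbf{Step 1: $\Kzar_0(X)\to K'_0(X)$.} Theorem~\ref{thm:K-vs-K'} says that for smooth $X$ the natural map $\Kzar(X)\to K'(X)$ is a weak homotopy equivalence. Taking $\pi_0$ gives the first isomorphism.

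\textbf{Step 2: $K'_0(X)\to K'_0(X_k)$.} Theorem~\ref{thm:comparison} says the realization map $K'_0(X)\to K'_0(X_k)$ is an isomorphism. That theorem is stated for toric monoid schemes, meaning separated, connected, torsionfree, normal and of finite type. So I must check that a smooth monoid scheme in $\cMpctf$ satisfies these hypotheses:
\begin{itemize}
\item Normality: by Remark~\ref{rem:smooth-monoid-scheme}, every stalk of a smooth $X$ is $\N_+^{\wedge s}\wedge\Z_+^{\wedge t}$, which is cancellative, torsionfree and normal.
\item Separatedness and finite type: these come from membership in $\cMpctf$.
\item Connectedness: if $X$ is not connected, split it as a finite disjoint union of its connected components. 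Both sides of each map turn finite disjoint unions into direct sums; for $K'$ this follows from Corollary~\ref{cor:K'-MV}, and for $\Kzar$ from Zariski descent. So it suffices to treat each component separately.
\end{itemize}
Alternatively, Remark~\ref{rem:higher-comparison} says the comparison holds for general pc monoid schemes, which would make these checks unnecessary.

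\textbf{Step 3: $K'_0(X_k)\to K_0(X_k)$.} By Remark~\ref{rem:smooth-monoid-scheme}, $X_k$ is a smooth $k$-scheme. It is also separated and of finite type, hence noetherian and regular. For such schemes, resolution by vector bundles (Quillen's resolution theorem) shows that the Cartier map $K_0(X_k)\to K'_0(X_k)$ is an isomorphism. Its inverse gives the last isomorphism.

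The only point needing care is Step 2, namely confirming that $X$ satisfies the hypotheses of Theorem~\ref{thm:comparison}. I expect the stalk description in Remark~\ref{rem:smooth-monoid-scheme}, together with the reduction to connected components, to settle this.
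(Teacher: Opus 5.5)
Your proposal is correct and follows the paper's proof, which is the one-line instruction to combine Theorem~\ref{thm:K-vs-K'} with Theorem~\ref{thm:comparison}. Your version also makes explicit two steps the paper leaves tacit: that a smooth $X$ in $\cMpctf$ satisfies, componentwise, the toric hypotheses of Theorem~\ref{thm:comparison}, and that $K_0(X_k)\cong K'_0(X_k)$ because $X_k$ is regular.
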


\begin{proof}
	Combine Theorems \ref{thm:K-vs-K'} and \ref{thm:comparison}.
\end{proof}

Let $X = \MSpec (A)$ be a smooth, affine, toric monoid scheme, 
and $i:Y\hookrightarrow X$ a smooth equivariant closed subscheme. Observe that there exist natural numbers $r$, $s$, $t$ such that the immersion $i:Y\hookrightarrow X$ is isomorphic to the immersion $\{0\}\times \A^s\times \G_m^t \hookrightarrow \A^r\times \A^s\times \G_m^t$. It follows that $X$ is a vector bundle over $Y$.

Now let $\pi: X'\to X$ be the blow-up of $X$ along $Y$, and $i':Y' = \pi^{-1}(Y)\hookrightarrow X'$ the exceptional divisor. Then $X'$ is a line bundle over $Y'$ with zero section $i'$,
 because the blow-up of the affine space $\A^r$ at the origin is the total space of the tautological line bundle over $\bbP^{r-1}$ and $X'$ is the product of this blow-up with $\A^s\times \G_m^t.$

\begin{lem}\label{lem:local-embedding}
	The pullback maps $i^*:\Kzar(X)\to \Kzar(Y)$ and $i'^*:\Kzar(X')\to \Kzar(Y')$ are weak
	 homotopy equivalences. 
\end{lem}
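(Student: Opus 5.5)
The idea is to use homotopy invariance along the vector bundle $p:X\to Y$ and the line bundle $p':X'\to Y'$. In both cases the zero section $i$ (resp.\ $i'$) is a section of the bundle projection, so $i^*p^*=\mathrm{id}$ on $\Kzar(Y)$ and $i'^*p'^*=\mathrm{id}$ on $\Kzar(Y')$. It therefore suffices to prove that $p^*:\Kzar(Y)\to\Kzar(X)$ and $p'^*:\Kzar(Y')\to\Kzar(X')$ are weak homotopy equivalences. By two-out-of-three, $i^*$ and $i'^*$ are then weak homotopy equivalences as well.

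\textbf{Case $i:Y\to X$.} Everything is affine: $Y\cong\A^s\times\G_m^t$ and $X\cong\A^r\times Y$. By Lemma~\ref{lem:affine-equivalence}, $\Kzar$ agrees with $\KQ$ on both. The monoid of $X$ is $B\wedge\N_+^{\wedge r}$, where $B$ is the monoid of $Y$. Applying Lemma~\ref{lem:K-htpy-invariance} $r$ times shows that $\KQ(B)\to\KQ(B\wedge\N_+^{\wedge r})$ is a weak homotopy equivalence, which gives the claim for $p$.

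\textbf{Case $i':Y'\to X'$.} Here $Y'\cong\bbP^{r-1}\times\A^s\times\G_m^t$ is not affine, so I use Zariski descent. Cover $Y'$ by the standard affine opens $V_j=\A^{r-1}\times\A^s\times\G_m^t$, $j=1,\dots,r$. Over each $V_j$, and over every finite intersection $V_J$ of them, the tautological line bundle is trivial, and $p'^{-1}(V_J)\cong V_J\times\A^1$. Each $V_J$ is affine and smooth, being a product of affine spaces and tori. So by Lemma~\ref{lem:affine-equivalence} and Lemma~\ref{lem:K-htpy-invariance}, the map $\Kzar(V_J)\to\Kzar(V_J\times\A^1)$ is a weak homotopy equivalence, compatibly with the restriction maps between the $V_J$.

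Since $\Kzar=\bbH_\zar(-,\KQ)$ is Zariski fibrant, both $\Kzar(Y')$ and $\Kzar(X')$ are homotopy limits over the Čech nerves of the covers $\{V_j\}$ and $\{p'^{-1}(V_j)\}$. The map $p'^*$ is induced by a levelwise weak equivalence of these Čech diagrams, so it is a weak homotopy equivalence. Concretely, one can instead run a Mayer--Vietoris induction on the number of opens, using the analogue of Corollary~\ref{cor:K'-MV} for $\Kzar$ together with the five lemma.

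An alternative for this case uses Theorem~\ref{thm:K-vs-K'}: $X'$ and $Y'$ are smooth, so $\Kzar\simeq K'$ on both, and one could argue with $K'$ instead. Since the descent argument is already direct, I will not take that route.

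The main obstacle is to make the descent step rigorous: one must check that the homotopy-invariance equivalences on the affine pieces are natural with respect to the restriction maps, so that they assemble into a map of diagrams. This naturality is clear because $p'$ is a morphism of monoid schemes and $p'^*$ is restriction along it. Some care is also needed to confirm that every finite intersection $V_J$ is again a smooth affine monoid scheme of the form $\A^a\times\G_m^b$; this holds because intersections of standard opens of $\bbP^{r-1}$ are products of affine lines and tori.
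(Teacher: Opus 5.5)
Your proposal is correct and follows the paper's argument: the paper deduces homotopy invariance of $\Kzar$ for vector bundles from Lemma~\ref{lem:K-htpy-invariance} together with Zariski descent, and then uses that $i$ and $i'$ are the zero sections of the bundles $X\to Y$ and $X'\to Y'$. What you add is the detail the paper leaves implicit: the \v{C}ech-descent step over a trivializing affine cover of $Y'\cong\bbP^{r-1}\times\A^s\times\G_m^t$, and the two-out-of-three argument via $i^*p^*=\mathrm{id}$.
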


\begin{proof}
	$\Kzar$ is homotopy invariant for vector bundles by Lemma \ref{lem:K-htpy-invariance} and Zariski descent. Now the
	 assertion follows from the observations in the two paragraphs above.
\end{proof}

\goodbreak
\begin{thm}\label{thm:scdh-descent}
	Let $X$ be a smooth toric monoid scheme, $i:Y\hookrightarrow X$ a smooth closed equivariant subscheme, $X'\xrightarrow{\pi} X$ the blow-up along $Y$ and $Y'\xrightarrow{\pi'} Y$ the exceptional divisor. Let $U\subseteq X$ be open,
	 and write $U' = \pi^{-1}(U)$. Then the following commutative square of spectra is homotopy cartesian:
	\begin{equation}\label{MV-square}
		\xymatrix{
			\Kzar(U)  \ar[r]^{i^*}\ar[d]^{\pi^*}&
			\Kzar(U\cap Y) \ar[d]^{\pi'^*} 
			\\
			\Kzar(U')  \ar[r]^{i'^*}& \Kzar(U'\cap Y').
		}
	\end{equation}
\end{thm}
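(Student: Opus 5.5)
The plan is to reduce to the affine case via Zariski descent, and then settle the affine case with Lemma \ref{lem:local-embedding}.

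\emph{Step 1: reduction to affine $U$.} Regard the four corners of \eqref{MV-square} as presheaves of spectra on the small Zariski site of $X$:
\[
U\mapsto \Kzar(U),\quad \Kzar(U\cap Y),\quad \Kzar(\pi^{-1}U),\quad \Kzar(\pi^{-1}U\cap Y').
\]
Each of these satisfies descent on $X_\zar$. Indeed, $\Kzar$ is Zariski fibrant by definition, and the open cover $\{U_\alpha\}$ of $U$ pulls back to open covers of $U\cap Y$, of $U'$ and of $U'\cap Y'$. The homotopy fiber $F(U)$ of the map from $\Kzar(U)$ to the homotopy pullback of the other three corners is therefore also a presheaf satisfying Zariski descent, because homotopy limits commute with homotopy limits. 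A presheaf satisfying descent is weakly contractible as soon as its sheaves of homotopy groups vanish. So it suffices to show that $F(U)\simeq *$ for $U$ ranging over a basis of the topology. Since $X$ is smooth, every point has an equivariant affine open neighborhood of the form $\A^r\times\A^s\times\G_m^t$ on which $Y$ is given by $\{0\}\times\A^s\times\G_m^t$, as in the paragraph before Lemma \ref{lem:local-embedding}. If the affine chart misses $Y$, the identification of $Y$ is trivial instead. We may therefore take $U$ affine and smooth, with $U\cap Y$ either empty or a coordinate subspace.

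\emph{Step 2: the affine case.} If $U\cap Y=\emptyset$, then $\pi$ is an isomorphism over $U$ and both right-hand corners are $\Kzar(\emptyset)=*$, so the square is trivially cartesian. Otherwise we are exactly in the situation of Lemma \ref{lem:local-embedding}, which shows that both horizontal maps $i^*$ and $i'^*$ are weak homotopy equivalences. A commutative square whose two horizontal arrows are weak equivalences is homotopy cartesian, and this finishes the proof.

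\emph{Main obstacle.} The delicate point is Step 1: we must check that the local models really form a basis of the Zariski topology compatible with $Y$, and that descent for $\Kzar$ on $X_\zar$ transfers to the presheaves $U\mapsto\Kzar(\pi^{-1}U)$ and $U\mapsto \Kzar(U\cap Y)$.

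For the first point, the stalks of a smooth monoid scheme are smash products of a pointed free monoid and a pointed free abelian group (Remark \ref{rem:smooth-monoid-scheme}). An equivariant smooth closed subscheme is cut out by a subset of the free generators, since it corresponds to a face of a smooth cone.

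For the second point, I would use that fibrant replacement on $\cMpctf$ restricts to a fibrant replacement on any small site. Pulling back along the continuous maps $X'_\zar\to X_\zar$ and $Y_\zar\to X_\zar$ then preserves the descent property, because covers pull back to covers.
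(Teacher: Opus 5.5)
Your proposal is correct and follows essentially the paper's argument: reduce to affine opens by Zariski descent, then treat the case $V\cap Y=\emptyset$ directly and the case $V\cap Y\neq\emptyset$ via Lemma~\ref{lem:local-embedding}. The only difference is cosmetic. The paper compares the horizontal homotopy fibers $F\to F'$, while you use the total fiber of the map to the homotopy pullback, and for spectra these two formulations are equivalent.
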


\begin{proof}
Write $F(U) = \mathrm{hofib}(i^*\vert_U)$ and $F'(U) = \mathrm{hofib}(i'^*\vert_{\pi^{-1}(U)})$ for the (horizontal) homotopy fibers evaluated at $U$. Then $F$ and $F'$ are presheaves of spectra on $X$ satisfying Zariski descent, and the commutativity of the square induces a natural transformation $\phi:F\to F'$. Assume $V$ is affine. If $V\cap Y = \emptyset$, then $V'\to V$ is an isomorphism and $F(V)\to F'(V)$ is a weak homotopy equivalence. On the other hand, if $V\cap Y\neq \emptyset$, then Lemma \ref{lem:local-embedding} implies that $F(V)\simeq \ast\simeq F'(V)$. Therefore, $\phi(V)$ is a weak homotopy equivalence for all affine open $V\subseteq X$. 

By Zariski descent, $F(U)\to F'(U)$ is a weak homotopy equivalence for all open $U\subseteq X$. That is, \eqref{MV-square} is homotopy cartesian. 
\end{proof}

\section{Projective bundle formula.}\label{sec:proj-bundle-formula}

Let $E\to X$ be a (geometric) vector bundle of rank $r+1$ over a smooth monoid scheme $X$, and $p:\bbP = \bbP(E)\to X$ be the associated projective bundle. 
Write $z$ for the class of the tautological bundle $\mathcal{O}_{\bbP_k}(-1)$ in $K_0(\bbP_k)$. 
Let $\Z^S$ denote the free abelian group on the set $S = \{z^0,z^1,\cdots, z^r\}$, and $k$ a field. By \cite[VI.1.1]{SGA6}
\[K_0(\bbP_k) \cong \bigoplus_{m=0}^r K_0(X_k)\cdot z^m  = \Z^{S}\otimes_\Z K_0(X_k)\] 
as a $K_0(X_k)$-module; these isomorphisms are natural with respect to pullback. Combining with the flat pullback $p^*: K'_n(X)\to K'_n(\bbP)$, the action of Corollary \ref{cor:vb-action} gives a natural homomorphism 
\begin{equation}\label{phi-defn}
\phi^r:\Z^{S}\otimes_\Z K'_n(X)\to K'_n(\bbP)
\end{equation}
sending $v\otimes \alpha$ to $v\cup p^*(\alpha)$.

\begin{thm}\label{thm:proj-bundle-formula}
	Let $n\geq 0$ and $r\geq 0$. The natural homomorphism $\phi^r$ of \eqref{phi-defn} is an
	 isomorphism:
	\[\Z^{S}\otimes_\Z K'_n(X)\xrightarrow{\phi^r} K'_n(\bbP).\]
	 Equivalently, 
	 there are natural isomorphisms, 
	\[\phi^E:\Kzar_0(\bbP)\otimes_{\Kzar_0(X)} K'_n(X)\to K'_n(\bbP).\]
\end{thm}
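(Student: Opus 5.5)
We reduce to the case of a trivial bundle over an affine base using Zariski descent, and then settle that case by induction on $r$ via the localization sequence. First note that both sides of $\phi^r$ are homotopy groups of presheaves of spectra on $X_\zar$: $U\mapsto \bigvee_{m=0}^r K'(U)$ and $U\mapsto K'(\bbP|_U)$. The map $\phi^r$ comes from a map of presheaves of spectra, built from $p^*$ and the cup-product action by the classes $z^m$ of Corollary \ref{cor:vb-action}. Both presheaves satisfy Zariski descent by Lemma \ref{lem:K'-descent}, applied on $X$ and on $\bbP$; for the second we use that $U\mapsto \bbP|_U$ sends Zariski covers to Zariski covers. Hence, by the Mayer--Vietoris sequence of Corollary \ref{cor:K'-MV}, induction on the number of affine opens, and the five lemma, it is enough to prove that $\phi^r$ is an equivalence when $X$ is affine and $E$ is trivial.

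In that case $X\cong \A^s\times\G_m^t$ by Remark \ref{rem:smooth-monoid-scheme}, and $\bbP=\bbP^r\times X$. We argue by induction on $r$, the case $r=0$ being trivial. Choose a hyperplane $\bbP^{r-1}\times X\subset \bbP^r\times X$, which is an equivariant closed subscheme. Its complement is $\A^r\times X$. Theorem \ref{thm:K-loc} gives a fibration
\[ K'(\bbP^{r-1}\times X)\to K'(\bbP^r\times X)\to K'(\A^r\times X),\]
and $K'(\A^r\times X)\simeq K'(X)$ by Lemma \ref{lem:K-htpy-invariance}. We compare this with the split cofibration $\bigvee_{m=0}^{r-1}K'(X)\to \bigvee_{m=0}^{r}K'(X)\to K'(X)$. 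For the left-hand comparison we use the induction hypothesis, modified by the change of basis on $\Z^S$ given by $1-z=[\mathcal O_{\bbP^{r-1}}]$. The right-hand comparison is the restriction of $z^m\cup p^*$, and it is compatible because $z$ restricts to $1$ on $\A^r\times X$.

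The main obstacle is making this comparison strictly compatible at the level of spectra. Concretely, we need the identity
\[ i_*(v|_{\bbP^{r-1}}\cup \alpha)=v\cup i_*\alpha, \]
together with the relation $i_*[\mathcal O]=1-z$, for the action of Corollary \ref{cor:vb-action}. This is a projection formula for the equivariant closed immersion. I would first try to check it on exact functors of categories of pc sheaves, where $z$ acts by tensoring with a line bundle and so visibly commutes with $i_*$.

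An alternative route is available, since all the terms are equivalent by Theorem \ref{thm:comparison}. We could deduce the result from the classical projective bundle formula of \cite{SGA6}, \cite{Quillen} for $X_k$, using $K(k)\wedge -$ and taking $k=\mathbb F_p$ for several $p$ together with $\mathbb Q$ to detect equivalences. However, the direct argument above seems cleaner.

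Finally, the equivalent formulation follows from two facts. The case $n=0$, together with Corollary \ref{cor:K_0}, shows that $\Kzar_0(\bbP)$ is free over $\Kzar_0(X)$ on $S$. Tensoring $K'_n(X)$ over $\Kzar_0(X)$ with this free module then recovers $\Z^S\otimes K'_n(X)$.
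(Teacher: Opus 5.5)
Your proposal is correct, and its skeleton is the paper's: a Mayer--Vietoris reduction to affine $X$ with $E$ trivial, then induction on $r$ through the hyperplane localization sequence, using the change of basis $t=1-z$ and $z|_{\A^r\times X}=1$. The real difference is the base over which you run the induction on $r$.

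Once $X\cong\A^s\times\G_m^t$, the paper first uses homotopy invariance and the Fundamental Theorem to reduce to $X=\pt$, and only then proves Proposition~\ref{prop:proj-space}. Over the point, the only multiplication that must commute with $i_*$ is by $\alpha\in K'_n(\pt)=\pi^s_n$. That is the identity of functors in Lemma~\ref{lem:projection_formula_1}, which is essentially automatic because it only involves the $\mathbb{S}$-module structure that any map of spectra respects. The remaining relation $i_*(y|_{\bbP^{r-1}})=ty$ is a $K_0$ statement that can be checked over a field.

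You avoid the Fundamental Theorem step, which requires $\phi^r$ to be compatible with the splitting of $K'(-\times\G_m)$. You also need homotopy invariance only for the affine scheme $\A^r\times X$. The price is a stronger projection formula. With $H=\bbP^{r-1}\times X$ and $p_H\colon H\to X$, your left square requires
\[
i_*\bigl(y|_H\cup p_H^*\beta\bigr)=t\,y\cup p^*\beta\qquad(y \text{ a polynomial in } z,\ \beta\in K'_n(X)).
\]
For $n>0$ this is not a formal consequence of the $K_0$ relation $i_*[\mathcal O_H]=1-z$ plus your projection formula with $v$ of degree $0$. You have two options:
\begin{enumerate}
\item Use that formula with $v=p^*\beta\in\Kzar_n(\bbP)$. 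This amounts to $\Kzar(\bbP)$-linearity of $i_*$, which the appendix does not provide.
\item More cheaply, apply the additivity theorem to the natural exact sequence of exact functors
\[
0\to p^*G\wedge\mathcal O(-1)\to p^*G\to i_*p_H^*G\to 0\qquad(G\in\Sh_X),
\]
after identifying $z\cup(-)$ with $(-)\wedge\mathcal O(-1)$. The sequence is exact by a chart computation, since $H$ is cut out by one coordinate. This gives $i_*p_H^*=(1-z)\cup p^*$ on $K'_n(X)$.
\end{enumerate}
With that step made explicit, your argument is complete and somewhat more self-contained than the paper's. The paper's detour through the point is exactly what makes its projection formula trivial.
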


Our proof follows Quillen's argument in \cite[Section 7, 4.3]{Quillen}, reducing to the case $X = \pt$, see also \cite[Prop.\,5.7]{HW}.

\begin{prop}\label{prop:proj-space}
	Let $n,\,r\geq 0.$ Then the map $\phi^r$ of \eqref{phi-defn} is an isomorphism
		\[\Z^{S}\otimes_\Z K'_n(\pt)\xrightarrow{\sim} K'_n(\bbP^r).\]		
\end{prop}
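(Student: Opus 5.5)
We prove the statement by induction on $r$, using the localization sequence of Theorem \ref{thm:K-loc} for the decomposition of $\bbP^r$ into a hyperplane and its complement. The case $r=0$ is trivial: $\bbP^0=\pt$, $S=\{z^0\}$, and $\phi^0$ is the identity of $K'_n(\pt)$.

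For $r\ge1$, let $i:\bbP^{r-1}\hookrightarrow\bbP^r$ be an equivariant coordinate hyperplane, with open complement $j:\A^r\hookrightarrow \bbP^r$. Theorem \ref{thm:K-loc} gives a long exact sequence
\[
\cdots\to K'_{n+1}(\A^r)\to K'_n(\bbP^{r-1})\xrightarrow{i_*}K'_n(\bbP^r)\xrightarrow{j^*}K'_n(\A^r)\to\cdots
\]
By Lemma \ref{lem:K-htpy-invariance}, applied $r$ times, $K'_n(\A^r)\cong K'_n(\pt)$ via pullback along $\A^r\to\pt$. The composite
\[
K'_n(\pt)\xrightarrow{p^*}K'_n(\bbP^r)\xrightarrow{j^*}K'_n(\A^r)
\]
is therefore an isomorphism. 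So $j^*$ is split surjective in every degree, the boundary maps vanish, and the sequence reduces to split short exact sequences
\[
0\to K'_n(\bbP^{r-1})\xrightarrow{i_*}K'_n(\bbP^r)\xrightarrow{j^*} K'_n(\pt)\to0 .
\]
Here $i_*$ is injective, and $K'_n(\bbP^r)\cong K'_n(\bbP^{r-1})\oplus p^*K'_n(\pt)$. By induction, $K'_n(\bbP^r)$ is free of rank $r+1$ over $K'_n(\pt)$ in the sense required.

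It remains to check that this decomposition matches the basis $z^0,\dots,z^r$ used to define $\phi^r$, i.e.\ that $\phi^r$ itself is the isomorphism. I would replace the basis $\{z^m\}$ by $\{(1-z)^m\}$, which spans the same free abelian group $\Z^S$ by a unitriangular change of basis. In $K_0(\bbP^r_k)$ the class $(1-z)^m$ is the class of the structure sheaf of a codimension-$m$ linear subspace, by the Koszul resolution of a hyperplane $0\to\mathcal O(-1)\to\mathcal O\to\mathcal O_H\to 0$ and its iterates. The needed compatibility is the projection formula
\[
(1-z)\cup p^*\alpha \;=\; i_*\,p_{r-1}^*\alpha
\]
in $K'_n(\bbP^r)$, together with its analogues for higher powers, where $p_{r-1}:\bbP^{r-1}\to\pt$ is the structure map. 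Granting it, $\phi^r$ sends the summand $(1-z)^m\otimes K'_n(\pt)$ for $m\ge1$ onto $i_*\phi^{r-1}\big((1-z)^{m-1}\otimes K'_n(\pt)\big)$, and sends $z^0\otimes K'_n(\pt)$ to $p^*K'_n(\pt)$. Hence $\phi^r$ is an isomorphism by the five lemma applied to the split short exact sequence above.

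The main obstacle is this last projection formula. The action of $K_0$ classes on $K'_n$ comes from Corollary \ref{cor:vb-action} (the action of line and vector bundles), so I would prove the formula at the level of exact functors on pc sheaves. On $\bbP^r$, the sequence $\mathcal O(-1)\to\mathcal O\to \mathcal O_H$ of pc $\mathcal O$-sets is a cofibration sequence in the quasi-exact category. Tensoring it with $p^*\mathcal F$ gives a natural short exact sequence of exact functors
\[
\mathcal O(-1)\otimes p^*(-)\;\longrightarrow\; p^*(-)\;\longrightarrow\; i_*p_{r-1}^*(-).
\]
The additivity theorem for quasi-exact categories (as in \cite{CW}) then yields the identity on $K'_n$, and the higher powers follow by iterating along the flag of linear subspaces.
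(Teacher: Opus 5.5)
Your proof is correct and follows the paper's argument: induction on $r$, the localization ladder for $\bbP^{r-1}\subset\bbP^r\supset\A^r$ with $j^*$ split by $p^*$ via homotopy invariance, the basis $t=1-z$, and the five lemma. The only difference is in checking the left square. The paper derives $i_*(y\cup q_{r-1}^*\alpha)=i_*(y)\cup q_r^*\alpha$ from the identity of functors in Lemma~\ref{lem:projection_formula_1} and then uses $i_*(t^{m-1})=t^{m}$ in $K_0$. You instead obtain $t^m\cup p^*\alpha=i_*(t^{m-1}\cup p_{r-1}^*\alpha)$ by applying additivity to $\mathcal O(-1)\wedge p^*(-)\to p^*(-)\to i_*p_{r-1}^*(-)$ and iterating along the flag, which amounts to the same thing.
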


\begin{proof}
	For any field $k$, $K_0(\bbP^r_k)$ is isomorphic to $A_r = \Z[t]/(t^{r+1})$ as a ring, where $t = 1-z$. 
	We regard $\phi^r$ as a map $A_r\otimes_\Z K'_n(\pt)\to K'_n(\bbP^{r})$. 
	To prove that $\phi^r$ is an isomorphism for all $r$, we proceed by induction on $r\geq 0$. 
	
  The assertion is trivial if $r = 0$. Suppose now that $r > 0$ and we have proved that $\phi^{r-1}: A_{r-1}\otimes_\Z K'_n(\pt)\to K'_n(\bbP^{r-1})$ is an isomorphism. Recall from \cite{HW} that $K'_n(\pt) \cong \pi^s_n$, the $n$-th stable stem. 
  
  Let $\bbP^{r-1}\xrightarrow{i} \bbP^r$ be the usual closed immersion, and $\A^r\xrightarrow{j} \bbP^r$ the complementary open immersion. Consider the following (not obviously commutative) diagram:
	\begin{equation}\label{projective-ladder}
		\xymatrix{
			0  \ar[r]&
			A_{r-1}\otimes_\Z \pi^s_n \ar[d]^{\phi^{r-1}} \ar[r]^{t\cdot}& A_r\otimes_\Z \pi^s_n \ar[d]^{\phi^r} \ar[r]^{ev_0}& \pi^s_n \ar[d]^{\lambda^*} \ar[r]& 0
			\\
			0  \ar[r]&
			K'_n(\bbP^{r-1}) \ar[r]^{i_*}& K'_n(\bbP^r) \ar[r]^{j^*}& K'_n(\A^r) \ar[r]& 0
		}
	\end{equation}
	
	The first vertical map $\phi^{r-1}$ is an isomorphism by induction, and the third vertical map $\pi^s_n \cong K'_n(\pt)\to K'_n(\A^r)$ is given by flat pullback, which is an isomorphism by the homotopy invariance of $K'_\ast$. The top row in this diagram is (split) exact. 
	
	The composition of flat pullbacks $K'_n(\pt)\to K'_n(\bbP^r)\to K'_n(\A^r)$ is an isomorphism by homotopy invariance, and thus the restriction map $j^*:K'_n(\bbP^r)\to K'_n(\A^r)$ is split surjective for all $n\geq 0$; it follows that the bottom row of the diagram is (split) exact for all $n$ as well. To conclude the proof, it therefore suffices to show that the diagram \eqref{projective-ladder} commutes. 
	
	Consider the left-hand square first. Unravelling definitions, we must show the following:
	
	\noindent{\it Claim:} Write $q_r:\bbP^r\to \pt$ and $q_{r-1}: \bbP^{r-1}\to \pt$ for the structure maps, and $i:\bbP^{r-1}\to\bbP^r$ for the closed immersion. Let $k$ be a field, $y\in K_0(\bbP^{r-1}_k)\cong K'_0(\bbP^{r-1})$, and $\alpha\in K'_n(\pt)$. Then
	\[ i_*(y\cup q_{r-1}^*(\alpha)) = i_*(y)\cup q_r^*(\alpha). \]
	Indeed, this is a special case of Lemma \ref{lem:projection_formula_1} below. 
	
	Now consider the right-hand square of \eqref{projective-ladder}, writing $ev_0$ for the evaluation at $0$ map and $\lambda:\A^r\to \pt$ for the structure map, so that the right vertical isomorphism acts on $\alpha\in\pi^s_n\cong K'_n(\pt)$ as $\alpha\mapsto \lambda^*(\alpha)$. We have to show that $j^*\circ\phi^r = \lambda^*\circ ev_0$. By commutativity of the left square, both composites are zero on elements of the form $t^m\otimes\alpha$ with $m > 0$. If $m = 0$, then by functoriality of flat pullback:
	$j^*(\phi^r(1\otimes\alpha))  = j^* (q_r^*(\alpha)) = \lambda^*(\alpha)$
\end{proof}

\begin{proof}[Proof of Theorem \ref{thm:proj-bundle-formula}]
	As observed in \cite[4.3]{Quillen}, the assertions that $\phi^r$ and $\phi^E$ are isomorphisms are equivalent. Consider the presheaves of sequences of abelian groups on $X$ sending $j:U\hookrightarrow X$ to the sequences 
	\[\mathbf{A}_n(U) = \Z^{S}\otimes_\Z K'_n(U)\; \text{and}\; 
	\mathbf{B}_n(U) = K'_n(\bbP(j^* E))\]
	 in the theorem, respectively. Then $\phi^r$ induces a natural transformation between these presheaves. 
	
	Since $\Z^{S}$ is a free abelian group, Corollary \ref{cor:K'-MV} implies that $\mathbf{A}_*$ has a long exact Mayer-Vietoris sequence for open covers. By the same result, $\mathbf{B}_*$ satisfies Mayer-Vietoris since, for an open immersion $j:U\hookrightarrow X$, $\bbP(j^* E)\to \bbP(E)$ is also an open immersion. Using induction on the cardinality of an affine open cover of $X$, we reduce to the case where $X$ is affine. Now since $X$ is smooth, 
	$X = \MSpec(\N_+^{\wedge s}\wedge \Z_+^{\wedge t})$. This implies in particular that $p:E\to X$ is a trivial vector bundle. Applying homotopy invariance and the Fundamental Theorem \cite[Theorem 4.6]{HW}, we can further reduce to the case where $X$ is a point, so that $\bbP(E) \cong \bbP^{r}$. Now Proposition \ref{prop:proj-space} applies to finish the proof. \end{proof}

\section{$K$-theory of smooth proper monoid schemes.}\label{sec:K-of-sm-proj}

In this section, we show that the $\Kzar$-theory of a smooth proper
toric monoid scheme $X$ is entirely determined by $\Kzar_0(X)$. Assume
that $\Kzar_0(X)$ is a free abelian group. Then there is a natural map
of ring spectra (see \ref{cor:algebras})
\begin{equation}\label{eqn:K_0-action}
	\mu^X: \Kzar_0(X)\otimes \mathbb{S}\to \Kzar(X)
\end{equation}
which, on homotopy groups, induces the homomorphisms
\[
\mu^X_n: \Kzar_0(X)\otimes \pi^s_n = \Kzar_0(X)\otimes \Kzar_n(\pt) \to \Kzar_n(X)
\]
given by $\mu^X_n(v\otimes\alpha) = v\cup \lambda^*(\alpha),$
where $\lambda$ is the structure map $X\to\pt$.
	
\begin{thm}\label{thm:K-of-sm-proj}
  Let $X$ be a smooth proper toric monoid scheme, and $k$ a field.
  Then $\Kzar_0(X)$ is a free abelian group, and the natural map
  of \eqref{eqn:K_0-action} is a weak homotopy equivalence.
  Equivalently, the natural homomorphisms 
  \[ \mu^X_n: \Kzar_0(X)\otimes_\Z \pi^s_n
  = \Kzar_0(X)\otimes_\Z \Kzar_n(\pt) \xrightarrow{\sim} \Kzar_n(X)
  \]
are isomorphisms for all $n\geq 0$.
\end{thm}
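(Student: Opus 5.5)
We will prove Theorem \ref{thm:K-of-sm-proj} by reducing to projective spaces through blow-ups, in the style of the toric weak factorization theorem. Throughout we identify $\Kzar(X)\simeq K'(X)$ via Theorem \ref{thm:K-vs-K'}. By Corollary \ref{cor:K_0}, $\Kzar_0(X)\cong K_0(X_k)$ for any field $k$. For a smooth proper toric variety, $K_0(X_k)$ is free abelian of finite rank, since it has a cellular decomposition into torus orbits. Alternatively, freeness will fall out of the induction below. Since $\mu^X$ is a map of connective spectra whose source is a finite wedge of sphere spectra, it suffices to show that $\mu^X_n$ is an isomorphism for every $n$.

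\emph{Base case and stability under blow-ups.} For $X=\bbP^r$ the statement is exactly Proposition \ref{prop:proj-space}, together with Theorem \ref{thm:K-vs-K'}. More generally, Theorem \ref{thm:proj-bundle-formula} shows that the class of smooth monoid schemes $X$ for which $\Kzar_0(X)$ is free and $\mu^X_n$ is an isomorphism is closed under passing to projective bundles $\bbP(E)\to X$. Indeed, $\Kzar_0(\bbP)=\Z^S\otimes\Kzar_0(X)$ and $\Kzar_n(\bbP)=\Z^S\otimes\Kzar_n(X)$, and $\mu^{\bbP}$ matches $\phi^r\circ(\Z^S\otimes\mu^X)$. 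Checking that these identifications are compatible with cup products is a compatibility check using the multiplicative structure of Appendix \ref{app:mult}.

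Next let $X'\to X$ be the blow-up along a smooth equivariant $Y$ with exceptional divisor $Y'$. By the local description before Lemma \ref{lem:local-embedding}, $Y'=\bbP(N_Y)$ is a projective bundle over $Y$. Theorem \ref{thm:scdh-descent} with $U=X$ gives a homotopy cartesian square, and hence a long exact Mayer--Vietoris sequence
\[
\cdots\to \Kzar_n(X)\to \Kzar_n(Y)\oplus\Kzar_n(X')\to\Kzar_n(Y')\to\Kzar_{n-1}(X)\to\cdots.
\]
The map $\Kzar_n(Y)\to\Kzar_n(Y')$ is split injective by the projective bundle formula, so this sequence breaks into split short exact sequences. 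The same holds for the corresponding sequence of $K_0\otimes\pi^s_n$ terms. The maps $\mu$ are natural in pullbacks and form a map between these two exact sequences. A five-lemma argument then shows: if two of the schemes $X,X'$ satisfy the conclusion (and $Y,Y'$ do, being of smaller dimension or projective bundles), then so does the third. Freeness of $K_0$ also transfers along split sequences. The theorem for $Y$ holds by induction on dimension, since $Y$ is again a smooth proper toric monoid scheme.

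\emph{Connecting to projective space.} By the combinatorial weak factorization theorem for smooth complete fans (Włodarczyk and Morelli), any two smooth complete fans in $N$ are connected by a zigzag of equivariant blow-ups along smooth orbit closures. In particular $X$ is connected to $\bbP^d\times\G_m^{t}$-free complete models; since $X$ is proper, its fan is complete and the zigzag connects it to $\bbP^d$ with $d=\dim X$. Each step preserves the property by the two-out-of-three statement above, so the theorem follows.

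The main obstacle is the two-out-of-three step. It requires checking that the $\mu$'s commute with the boundary maps, or, more cleanly, working at the level of spectra: the square of ring spectra $K_0(-)\otimes\mathbb{S}$ is homotopy cartesian because the $K_0$ square is a split pullback of free groups, and $\mu$ is a map of squares. One must also justify invoking weak factorization for monoid schemes, which is possible because it is purely a statement about fans.
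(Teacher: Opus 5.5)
Your outline matches the paper's: induction on dimension, the projective bundle formula for $Y'\to Y$, the homotopy cartesian square of Theorem \ref{thm:scdh-descent}, a five-lemma comparison, and toric factorization. However, the step the five lemma depends on is justified by an invalid inference. You claim that because $\pi'^*\colon\Kzar_n(Y)\to\Kzar_n(Y')$ is split injective, the Mayer--Vietoris sequence breaks into split short exact sequences. For a homotopy cartesian square with corners $A,B,C,D$, the sequence $\cdots\to\pi_nA\to\pi_nB\oplus\pi_nC\to\pi_nD\xrightarrow{\delta_n}\pi_{n-1}A\to\cdots$ breaks up exactly when $\pi_nB\oplus\pi_nC\to\pi_nD$ is onto for every $n$. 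Injectivity of $\pi_*B\to\pi_*D$ gives no information about that. For example, the square with $A=\Omega D$ and $B=C=0$ is homotopy cartesian and $\pi_*B\to\pi_*D$ is trivially split injective, yet $\delta$ is an isomorphism. What would suffice is split injectivity of $\pi^*\colon\Kzar_*(X)\to\Kzar_*(X')$, but for monoid schemes there is no proper pushforward available to split it. The same gap affects your claim that $0\to\Kzar_0(X)\to\Kzar_0(Y)\oplus\Kzar_0(X')\to\Kzar_0(Y')\to0$ is exact. The paper obtains this split exact row from Thomason's blow-up results for $X_k$, via Corollary \ref{cor:K_0}. Without knowing $\delta_n=0$, the five lemma cannot be applied, since you have no control over how $\mu$ interacts with the boundary maps.

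The missing idea is to let $\mu^{Y'}_n$ do the work, as the paper does. Tensor the exact $K_0$-row with $\pi^s_n$; exactness is preserved because the row splits, $\Kzar_0(Y')$ being free. Then map it by $\mu$ to the Mayer--Vietoris row. Only the two left-hand squares need to commute, which is just naturality of $\mu_n$ under pullback. The top row surjects onto $\Kzar_0(Y')\otimes\pi^s_n$ and $\mu^{Y'}_n$ is an isomorphism, so $\Kzar_n(Y)\oplus\Kzar_n(X')\to\Kzar_n(Y')$ is onto. Hence $\delta_n=0$ for all $n$, the bottom row is short exact, and the five lemma gives that $\mu^X_n$ is an isomorphism if and only if $\mu^{X'}_n$ is. This also removes what you called the main obstacle: compatibility of $\mu$ with boundary maps is never needed. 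Your spectrum-level variant would require the same exact $K_0$-row plus a coherent (not merely homotopy-commutative) map of squares, which you do not construct.

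Two smaller points. Using a zigzag of blow-ups and blow-downs (weak factorization) instead of a common dominating blow-up is fine, because the single blow-up step is an if-and-only-if. Torus orbits are tori, not affine cells, so drop your first freeness argument in favour of the inductive one.
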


Our proof relies on the factorization theorem of Morelli
\cite{Morelli} and W\l{}odarczyk \cite{Wlodarczyk}. The following
statement suffices for our purposes, and is an immediate consequence
of the main result of \cite{Wlodarczyk}:
	\begin{prop}\label{prop:factorization}
		Let $X_1$ and $X_2$ be smooth proper toric monoid schemes of the same dimension. Then there exists a smooth proper toric monoid scheme $\widetilde{X}$ and morphisms $f_1:\widetilde{X}\to X_1$ and $f_2:\widetilde{X}\to X_2$ such that $f_1$ and $f_2$ are compositions of blow-ups along smooth closed equivariant subschemes. 
	\end{prop}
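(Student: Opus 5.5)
The plan is to translate the statement into the language of fans via the functor $X$ of Definition \ref{fan-monoid-scheme}, apply W\l{}odarczyk's combinatorial theorem on smooth fans, and translate back. The one result I take from \cite{Wlodarczyk} is the following: any two regular complete fans $\Delta_1,\Delta_2$ in the same $N_\R$ admit a common regular refinement $\widetilde\Delta$ that is obtained from each $\Delta_i$ by a finite sequence of regular star subdivisions. I am recalling this as the strong-from-above form of his main theorem; the paper calls Proposition \ref{prop:factorization} an immediate consequence of it, which supports this reading.

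\emph{Step 1: reduce to fans in a common lattice.} Since the functor $X$ from fans to toric monoid schemes is essentially surjective, write $X_i\cong X(N_i,\Delta_i)$. By Remark \ref{rem:smooth-monoid-scheme}, smoothness means every stalk is a smash product of a free pointed monoid and a free abelian group. For the stalk $A(\sigma)=(\sigma^\vee\cap M)_\ast$ this says exactly that $\sigma$ is generated by part of a $\Z$-basis of $N_i$, so $\Delta_i$ is regular. Properness of $X_i$ says $\Delta_i$ is complete: the realization $(X_i)_k$ is a proper toric variety, so \cite{Fulton} applies. Equal dimension gives $\operatorname{rank}N_1=\operatorname{rank}N_2$. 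Choosing an isomorphism $N_1\cong N_2=:N$ therefore makes $\Delta_1,\Delta_2$ two regular complete fans in the same $N_\R$.

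\emph{Step 2: apply W\l{}odarczyk and realize the fan maps as morphisms.} Apply the cited theorem to get $\widetilde\Delta$ together with chains of regular star subdivisions $\widetilde\Delta=\Delta_i^{(m_i)}\to\dots\to\Delta_i^{(0)}=\Delta_i$. Each refinement $\Delta'\to\Delta$ in the same lattice is a map of fans, so it induces a morphism $X(\Delta')\to X(\Delta)$. Set $\widetilde X=X(\widetilde\Delta)$. It is smooth and proper by Step 1 read backwards, since $\widetilde\Delta$ is regular and complete.

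\emph{Step 3 (main obstacle): star subdivision is a blow-up.} We must show that if $\tau\in\Delta$ is a cone of a regular fan and $\Delta^\ast(\tau)$ is the star subdivision along the ray through the sum of the primitive generators of $\tau$, then $X(\Delta^\ast(\tau))\to X(\Delta)$ is the blow-up (Definition \ref{def:blowup}) of $X(\Delta)$ along the reduced irreducible equivariant closed subscheme $Y=V(\tau)$ attached to $\tau$. Note that $Y$ is smooth, being the toric monoid scheme of the regular star fan of $\tau$.

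Both the blow-up $\MProj(\mathcal O_X[\cI t])$ and the subdivision are local over $X$. So it suffices to compare them over each affine chart $U_\sigma=\MSpec A(\sigma)$ with $\sigma\supseteq\tau$, and to check that the identifications glue along the faces $A(\tau')=A(\sigma)_{\p(\tau')}$. On such a chart, regularity gives
\[
U_\sigma\cong\A^r\times\A^s\times\G_m^t ,
\]
with $Y\cap U_\sigma=\{0\}\times\A^s\times\G_m^t$ and $r=\dim\tau$; this is the same normal form used before Lemma \ref{lem:local-embedding}. Then $\MProj$ of the Rees monoid of the ideal $(x_1,\dots,x_r)$ is covered by the $r$ charts obtained by inverting $x_j t$. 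Their stalk monoids are $\N_+$-generated by $x_j$ and $x_l/x_j$ for $l\ne j$, which are precisely the monoids $A(\sigma_j)$ of the cones $\sigma_j$ of $\Delta^\ast(\tau)$ that subdivide $\sigma$. This is a direct monoid computation, and it is compatible with localization at faces.

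As a consistency check, Definition \ref{def:blowup} says the $k$-realization of this blow-up is the classical toric blow-up along $V(\tau)_k$, which is known to correspond to the star subdivision. Composing along the chains from Step 2 then exhibits each $f_i:\widetilde X\to X_i$ as a composition of blow-ups along smooth closed equivariant subschemes.
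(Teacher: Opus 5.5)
Your route is the paper's route: its proof says only that the fans are regular and complete, applies \cite[Thm.\,A]{Wlodarczyk}, and applies the functor of Definition~\ref{fan-monoid-scheme}. Your Steps~1 and~3 correctly fill in details the paper leaves implicit. In particular, matching the charts of the Rees monoid of $(x_1,\dots,x_r)$ with the cones of the star subdivision of $\sigma$ is right.

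The gap is in Step~2, and the paper's own proof has exactly the same gap. The ``strong-from-above form'' you attribute to W\l{}odarczyk is not what he proved. His Theorem~A, like Morelli's toric theorem \cite{Morelli}, is the \emph{weak} Oda factorization. It says that two regular fans with the same support are joined by a finite chain $\Delta_1=\Delta^{(0)},\Delta^{(1)},\dots,\Delta^{(m)}=\Delta_2$ of regular fans. Each consecutive pair differs by one smooth star subdivision \emph{in one direction or the other}, so the chain is a zigzag of blow-ups and blow-downs, not a roof.

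A common regular refinement reached from both $\Delta_i$ by star subdivisions alone is Oda's \emph{strong} factorization conjecture. Morelli's claimed proof of it turned out to contain a gap, and the conjecture is still open in dimension $\ge 3$. Your argument never uses the freedom in choosing $N_1\cong N_2$, so Step~2 needs exactly this open statement. Inferring the strong form from the paper's phrase ``immediate consequence'' is circular.

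What Steps~1 and~3 combined with the actual Theorem~A do prove is a zigzag version. There is a chain $X_1=X^0,X^1,\dots,X^m=X_2$ of smooth proper toric monoid schemes; all intermediate fans are regular with complete support. For each $j$, one of $X^j,X^{j+1}$ is the blow-up of the other along some $V(\tau)$, which is a smooth proper toric monoid scheme of smaller dimension.

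This weaker statement is all the paper needs downstream, because Proposition~\ref{prop:single-blow-up} is an ``if and only if''. In the proof of Theorem~\ref{thm:K-of-sm-proj}, you can transport freeness of $\Kzar_0$ and bijectivity of every $\mu_n$ step by step along the zigzag from $\bbP^d$ to $X$. The induction hypothesis handles the centers, and no roof $\widetilde X$ is needed. So the correct repair is to restate Proposition~\ref{prop:factorization} in this zigzag form, rather than to look for a proof of the roof as stated.
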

	
\begin{proof}
The fans of $X_1$ and $X_2$ are both regular and complete by
assumption. Thus, they satisfy the hypotheses of
\cite[Thm.\,A]{Wlodarczyk}. Now the assertion follows by applying the
functor from fans to toric monoid schemes of Definition
\ref{fan-monoid-scheme}.
\end{proof}
Now let $X$ be a smooth proper toric monoid scheme of dimension $d$,
        and let $i:Y\hookrightarrow X$ be a
 smooth equivariant closed subscheme. Consider the blow-up square
\begin{equation}\label{blow-up-square}
	\xymatrix{
		Y'  \ar[r]^{i'}\ar[d]^{\pi'}&
		X' \ar[d]^{\pi} 
		\\
		Y  \ar[r]^{i}& X.
	}
\end{equation}
Note that $Y'\xrightarrow{\pi'} Y$ is a projective bundle. 
	
\begin{prop}\label{prop:single-blow-up}
  Assume that $\Kzar_0(Y)$ is free abelian and the map $\mu^Y_n$
  of \eqref{eqn:K_0-action} is an isomorphism for all $n\geq 0$. \\
  Then $\Kzar_0(X)$ is free and $\mu^X_n$ is an isomorphism for all
  $n\geq 0$ if, and only if, $\Kzar_0(X')$ is free and
  $\mu^{X'}_n$ is an isomorphism for all $n\geq 0$. 
\end{prop}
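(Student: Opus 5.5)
We use the homotopy cartesian square of Theorem~\ref{thm:scdh-descent} with $U=X$:
\[
\Kzar(X)\to \Kzar(Y)\times\Kzar(X')\rightrightarrows \Kzar(Y').
\]
It gives a long exact Mayer--Vietoris sequence
\[
\cdots\to \Kzar_{n+1}(Y')\xrightarrow{\partial}\Kzar_n(X)\xrightarrow{(i^*,\pi^*)}\Kzar_n(Y)\oplus\Kzar_n(X')\xrightarrow{\pi'^*-i'^*}\Kzar_n(Y')\xrightarrow{\partial}\Kzar_{n-1}(X)\to\cdots
\]
The plan is to compare this sequence with the sequence obtained by applying $-\otimes\pi^s_*$ to its degree zero part, using the natural maps $\mu$.

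First, $Y'\to Y$ is a projective bundle $\bbP(N)$, where $N$ is the normal bundle, of some rank $r+1$. By Theorem~\ref{thm:proj-bundle-formula}, together with Theorem~\ref{thm:K-vs-K'} since everything is smooth, we get
\[
\Kzar_n(Y')\cong \Z^S\otimes \Kzar_n(Y)\cong \Z^S\otimes\Kzar_0(Y)\otimes\pi^s_n .
\]
Hence $\Kzar_0(Y')$ is free and $\mu^{Y'}_n$ is an isomorphism. Checking that this identification really is $\mu^{Y'}_n$ should only require that $\cup$ is associative and compatible with $p^*$, via the ring spectrum structure of Corollary~\ref{cor:algebras}.

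Second, I would show that the degree zero sequence is short exact and split:
\[
0\to\Kzar_0(X)\to\Kzar_0(Y)\oplus\Kzar_0(X')\to\Kzar_0(Y')\to0 .
\]
Surjectivity on the right holds because $\pi'^*$ already hits the summand $z^0\cdot\Kzar_0(Y)$. For the other summands, the classes $z^m$ with $m\geq 1$ are restrictions of classes on $X'$, namely powers of $[\mathcal O_{X'}(-1)]$, the exceptional line bundle. Surjectivity forces $\partial:\Kzar_1(Y')\to\Kzar_0(X)$ to vanish, so the left map is injective. A quotient of free groups is free, so $\Kzar_0(Y')$ free makes the sequence split. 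Consequently $\Kzar_0(X)$ is free if and only if $\Kzar_0(X')$ is: if $\Kzar_0(X)$ is free, the middle term is free, so $\Kzar_0(X')$ is a summand of a free group of finite rank; the converse is the same argument using the left inclusion. Tensoring with $\pi^s_n$ keeps the sequence short exact.

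Third, I would use naturality of $\mu$ under pullback to map the tensored short exact sequence into the degree $n$ part of the long exact sequence. The same surjectivity argument, applied with cup products by $\pi^s_n$, makes $\pi'^*-i'^*$ surjective in every degree $n$. So all connecting maps vanish, and the long sequence splits into short exact sequences. The maps $\mu^Y_n$ and $\mu^{Y'}_n$ are isomorphisms, so the five lemma gives: $\mu^X_n$ is an isomorphism if and only if $\mu^{X'}_n$ is.

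The main obstacle is the surjectivity of $\pi'^*-i'^*$ in degree zero. This needs a clean argument that the classes $z^m$ on $\bbP(N)$ extend to $X'$, i.e.\ that $\mathcal O_{Y'}(-1)$ is the restriction of $\mathcal O_{X'}(Y')$ or of its dual. If that is awkward to do directly on monoid schemes, I would pass to the realization $X_k$ via Corollary~\ref{cor:K_0}, where this is the classical fact about blow-ups.
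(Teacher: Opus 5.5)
Your architecture (Mayer--Vietoris sequence from Theorem~\ref{thm:scdh-descent}, projective bundle formula for $Y'$, split exactness in degree $0$, diagram chase, five lemma) matches the paper's. However, the degree-$0$ surjectivity step, which you yourself flag as the main obstacle, does not work as written.

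Since $\Kzar_0(Y')=\bigoplus_{m} z^m\cdot\pi'^*\Kzar_0(Y)$, you must reach every element $z^m\cdot\pi'^*(y)$, not just $z^m$. Writing $z=i'^*(\ell)$ for a line bundle class $\ell$ on $X'$ only produces the elements $i'^*(\ell^m\cdot\pi^*x)=z^m\cdot\pi'^*(i^*x)$ with $x\in\Kzar_0(X)$. The image $\pi'^*\Kzar_0(Y)+i'^*\Kzar_0(X')$ is merely a subgroup, not closed under multiplication by $z$. So you would additionally need surjectivity of $i^*\colon\Kzar_0(X)\to\Kzar_0(Y)$, or a push-forward/self-intersection argument, and you supply neither. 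Passing to $X_k$ knowing only that $\mathcal{O}_{X'}(Y')$ restricts to $\mathcal{O}_{Y'}(-1)$ leaves the same problem.

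The missing observation is much simpler. The long exact sequence you wrote continues with $\Kzar_0(Y')\xrightarrow{\partial}\Kzar_{-1}(X)$, and $\Kzar_{-1}(X)\cong K'_{-1}(X)=0$, because $X$ is smooth (Theorem~\ref{thm:K-vs-K'}) and $K'$ is connective. This is what the paper uses; it phrases it as the vanishing of negative $K$-theory of the smooth realization.

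Your deduction of injectivity is also misindexed. Surjectivity in degree $0$ kills $\partial\colon\Kzar_0(Y')\to\Kzar_{-1}(X)$, not $\partial\colon\Kzar_1(Y')\to\Kzar_0(X)$. Injectivity of $\Kzar_0(X)\to\Kzar_0(Y)\oplus\Kzar_0(X')$ is equivalent to surjectivity in degree $1$. You can get that from your third step once degree-$0$ surjectivity is known: use right exactness of $-\otimes\pi^s_1$, surjectivity of $\mu^{Y'}_1$ and naturality of $\mu$, which is exactly the paper's diagram chase. So that step must come before you conclude the degree-$0$ sequence is short exact. The paper instead quotes Thomason ([T, Lem.~2.3]) for the split injection $\Kzar_0(X)\to\Kzar_0(X')$.

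Separately, ``a quotient of free groups is free'' is false. What you actually need is only that a surjection onto the free group $\Kzar_0(Y')$ splits.

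With these repairs your route is correct and slightly more self-contained than the paper's. You obtain the freeness equivalence from the split sequence rather than from Thomason's formula $\Kzar_0(X')\cong\Kzar_0(X)\oplus\Kzar_0(Y)^{\oplus(c-1)}$.
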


\begin{proof}
It follows from the hypothesis on $Y$ and the projective bundle
formula (Theorem \ref{thm:proj-bundle-formula}) that
$\Kzar_0(Y')$ is free, and
$\mu^{Y'}_n$ is an isomorphism for all $n\geq 0$.
By \cite[Thm.\,2.1]{T} and Corollary \ref{cor:K_0},
\[\Kzar_0(X')\cong \Kzar_0(X)\oplus \Kzar_0(Y)^{\oplus (c-1)},\]
where $c$ is the codimension of $Y$ in $X$.
Since $\Kzar_0(Y)$ is assumed free, it follows that
$\Kzar_0(X)$ is free if, and only if, $\Kzar_0(X')$ is. 
	
Let $k$ be a field. Then \cite[Lem.\,2.3]{T} implies that 
\[\Kzar_0(X)\cong K_0(X_k)\to K_0(X'_k)\cong \Kzar_0(X')\]
is a split injection. Since $Y$ is smooth, $K_{-1}(Y_k) = 0$.
Therefore the Mayer-Vietoris sequence
\begin{equation}\label{MV-split} 
0\to \Kzar_0(X)\to \Kzar_0(Y)\oplus \Kzar_0(X')\to \Kzar_0(Y')\to 0 
\end{equation}
is split exact. 
	
Now consider the following diagram, where $n\geq 0$:
\begin{equation*}\xymatrix{
\Kzar_0(X)\otimes_\Z \pi^s_n \ar[d]^{\mu^X_n} \ar[r]^{\mathrm{into}}&
\left(\underset{\Kzar_0(Y)}{\overset{\Kzar_0(X')}{\oplus}}\right)
\otimes_\Z \pi^s_n \ar[d]^{\mu^Y_n\oplus\mu^{X'}_n} \ar[r]& \Kzar_0(Y')\otimes_\Z\pi^s_n \ar[d]^{\mu^{Y'}_n} \ar[r]& 0
			\\
\Kzar_n(X) \ar[r]& \left(\underset{\Kzar_n(Y)}{\overset{\Kzar_n(X')}{\oplus}}\right) \ar[r]&\Kzar_n(Y') \ar[r]^{\delta_n}& \Kzar_{n-1}(X) 
}
\end{equation*}

The bottom row is a piece of the long exact Mayer-Vietoris sequence
associated to the homotopy cartesian square \eqref{MV-square},
and the top row is the split exact sequence \eqref{MV-split},
tensored with $\Kzar_n(\pt) = \pi^s_n$.
The diagram commutes, since the action of Corollary \ref{cor:vb-action}
is natural. We observed above that, for all $n\geq 0$,
the right vertical homomorphism $\mu^{Y'}_n$ is an isomorphism; it follows by an easy diagram chase that $\delta_n = 0$, and thus the sequence
\[
  0\to \Kzar_n(X)\to \Kzar_n(Y)\oplus \Kzar_n(X')\to \Kzar_n(Y')\to 0
\]
is short exact for all $n\geq 0$ as well.
Now the Five Lemma implies that $\mu_n^X$ is an isomorphism
exactly when
$\mu_n^{X'}$ is. 
\end{proof}
	
\begin{proof}[Proof of Theorem \ref{thm:K-of-sm-proj}]
We proceed by induction on $d = \dim (X)$. If $d = 0$,
then $X = \pt$ and the assertion is straightforward.
Now let $d > 0$, and assume we have proved that $\Kzar_0(Y)$ is a
free abelian group and the map $\mu^Y$ of \eqref{eqn:K_0-action}
  is a weak homotopy equivalence for all smooth proper toric
  monoid schemes $Y$ of dimension less than $d$. 
		
  By Proposition \ref{prop:factorization}, there is a smooth proper
  toric monoid scheme $\tilde{X}$ and morphisms $\tilde{X}\to X$
  and $\tilde{X}\to\bbP^d$ that are compositions of blow-ups
  along smooth closed equivariant subschemes. Now $\mu^{\bbP}$,
  where $\bbP = \bbP^d$, is a weak homotopy equivalence by
  Proposition \ref{prop:proj-space}, and $\Kzar_0(\bbP^d)$ is
  free of rank $d+1$. 
  By induction on the number of blow-ups needed in the factorization,
  we conclude (using  Proposition \ref{prop:single-blow-up}) that
  $\Kzar_0(X)$ is a free abelian group, and that $\mu^X_n$ is an
  isomorphism for all $n\geq 0$. 
\end{proof}
	
\begin{rem}
  Using Theorem \ref{thm:comparison}, this provides
  an alternative proof for the proper case of
  Vezzosi and Vistoli \cite[Cor.\,6.10]{VV}.
\end{rem}

\section{The higher $K$-theory of matroids.}\label{sec:samples}

In this section, we compute the $K$-theory of some sample matroids,
as defined in Definition \ref{def:K-of-matroid}.
First, we briefly recall the relevant terminology.

\begin{defn}\label{lin-flat-defn}
A matroid is a set $E$ together with a collection 
$\cF$ of subsets called ``flats" satisfying the following properties:
\begin{enumerate}
\item[(i)] $E\in\cF$.  
\item[(ii)] $F,G\in\cF$ implies $F\cap G\in\cF$.
\item[(iii)] If $F$ is a flat, and $x\in E\setminus F$, 
  then there is a unique flat $F'$ minimal over $F$ containing $x$. 
	\end{enumerate}	
\end{defn}

\begin{ex}\label{ex:uniform} 
Let $E=\{1,2,...,n\}$ and $m\le n$. In the 
{\it uniform matroid} $U_{m,n}$ (see \cite[1.2.7]{Ox})
the flats are all subsets of $E$ having cardinality
less than $m$, and the whole set.
\end{ex}

If $(E,\cF)$ is a matroid, a {\em loop} is an element $x\in E$ such that $\{x\}$ is contained in every flat. A matroid is called {\em loopless} if it has no loops. To a matroid, one can associate a fan and hence, a monoid scheme (see Section \ref{sec:toric}):

\begin{defn}\label{Bergman}  (\cite[3.10]{Huh})
	Given a loopless matroid $(E,\cF)$ of rank $r+1$, 
	the {\it Bergman fan} $\Delta_{\cF}$
	is the $r$-dimensional fan in $N=\Z^{|E|}/(1,...,1)$ whose cones
	$\sigma_{\cF}$ correspond to
	flags of nonempty proper flats of $(E,\cF)$.
	\[
	\cF = (F_1 \subset F_2 \subset \cdots \subset F_d).
	\]
	More precisely, fix a bijection $E \cong \{0,1,\dotsc, n\}$ and let 
	\[
	\bu_0=(1,0,\dots,0),...,\bu_n=(0,\dots,0,1)\quad \text{in~}\, \Z^{n+1}.
	\]
	be the $n+1$ standard unit vectors in $\Z^{n+1}.$
	
	For a subset $S$ of $E = \{0,\dotsc, n\}$, let $\bu_S\in N$ be the
	image of the vector $\sum_{i\in S} \bu_i$. Then the cone associated to
	the flag $\cF$ is spanned by $\{\bu_{F_1},\dotsc, \bu_{F_d}\}.$ In
	particular, the set of rays of the Bergman fan is in one-to-one
	correspondence with the set of flats of the matroid.
	
	If $E$ is not loopless,  write $E=E'\amalg E''$,
	where $E'$ is loopless and $E''$ is the set of loops in $E$.
	Then the Bergman fan of $E$ is defined as the product of the
	Bergman fan of $E'$, and the fan corresponding to the
	one-point monoid scheme $\MSpec(\Z^r)$, with $r=|E'']$.
\end{defn}

\begin{defn}\label{defn:permutahedron}
	(See \cite[3.2]{Huh}.)
	The $n$-dimensional permutohedral fan in
	$N=\Z^{n+1}/\text{span}(1,1,\dots,1)$ is the complete fan
	whose cones are spanned by $\{\bu_{S_1}, \cdots, \bu_{S_d}\}$,
	where 
	\[\emptyset\subset S_1\subset S_2 \subset \dotsm \subset S_d\subset \{0,1,\dotsc, n\}\]
	 is a flag of proper, non-empty subsets. 
This is the Bergman fan of the Boolean matroid $U_{n+1,n+1}$.
\end{defn}

The Bergman fan of a general loopless matroid with
underlying set $E = \{0,1,\dotsc, n\}$ is, by construction,
a subfan of the $n$-dimensional permutohedral fan. 
The following well--known proposition is an immediate consequence
of this fact, and Definition \ref{defn:permutahedron}. 

\begin{prop}\label{bergman-fans-smooth}
The toric monoid scheme 
associated to the Bergman fan of a matroid $(E,\cF)$ is smooth and
quasiprojective.
\end{prop}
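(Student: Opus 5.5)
The plan is to reduce both properties to the permutohedral fan. By Definition \ref{Bergman}, the Bergman fan of $(E,\cF)$ is a subfan of the $n$-dimensional permutohedral fan (Definition \ref{defn:permutahedron}), after splitting off the loops.

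First I would treat the loopless case, with $E=\{0,\dots,n\}$. Each cone $\sigma_{\cF}$ is spanned by $\bu_{F_1},\dots,\bu_{F_d}$ for a flag $F_1\subset\dots\subset F_d$ of nonempty proper flats. This is also a flag of nonempty proper subsets, so $\sigma_{\cF}$ is a cone of the permutohedral fan $\Delta_n$. Faces of $\sigma_{\cF}$ correspond to subflags, which are again flags of flats. Hence $\Delta_{\cF}$ is a subfan of $\Delta_n$, and $X(\Delta_{\cF})$ is an open monoid subscheme of $X(\Delta_n)$ (the union of the affine charts indexed by its cones). It therefore suffices to show that $X(\Delta_n)$ is smooth and projective, because open subschemes of smooth schemes are smooth and open subschemes of projective schemes are quasiprojective.

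For smoothness I would check regularity of $\Delta_n$. For a maximal flag $S_1\subset\dots\subset S_n$ with $|S_i|=i$, reorder $E$ so that $S_i=\{0,\dots,i-1\}$. The vectors $\bu_{S_i}$, $i=1,\dots,n$, then form a $\Z$-basis of $N=\Z^{n+1}/(1,\dots,1)$. The reason is that the change of basis to the images of $\bu_0,\dots,\bu_{n-1}$ is unitriangular, and those images form a basis of $N$. Every cone is a face of a maximal one, so each cone is spanned by part of a lattice basis. The stalks $A(\sigma)$ are then of the form $\N_+^{\wedge s}\wedge\Z_+^{\wedge t}$, so the scheme is smooth in the sense of Remark \ref{rem:smooth-monoid-scheme}.

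For projectivity I would use the standard fact that $\Delta_n$ is the normal fan of the permutohedron, so it is complete and polytopal. It is also obtained from the fan of $\bbP^n$ by iterated star subdivisions, which correspond to the blow-ups of Definition \ref{def:blowup} along coordinate subspaces in increasing dimension. Blow-ups are projective by \cite[Section 7]{CHWW}, so $X(\Delta_n)$ is projective over $\pt$. I expect this step to be the main obstacle, since it needs the correspondence between polytopal fans, or iterated blow-ups, and projectivity in the monoid-scheme setting. I would try the blow-up description first, citing \cite{CHWW} for composing projective morphisms.

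Finally, in the case with loops, the Bergman fan is the product of the loopless Bergman fan with the fan of a torus. Its monoid scheme is therefore $X(\Delta_{\cF'})\times\G_m^{r}$. A product with a torus preserves smoothness, since stalks are smash products with $\Z_+^{\wedge r}$. It also preserves quasiprojectivity, since the torus is affine.
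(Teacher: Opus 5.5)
Your proposal is correct and takes the same approach as the paper, which states the proposition as an immediate consequence of the Bergman fan being a subfan of the permutohedral fan, whose monoid scheme is smooth and projective. You supply the details the paper leaves as well-known: the unitriangular basis check for regularity, projectivity via the permutohedron or via iterated blow-ups of $\bbP^n$ (more precisely along the strict transforms of the coordinate subspaces), and the product with $\G_m^r$ when there are loops.
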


Recall from Definition \ref{def:K-of-matroid} that the $K$-theory
spectrum of a matroid $(E,\cF)$ is defined as $\Kzar(X)$, where $X$ is
the toric monoid scheme associated to the Bergman fan of
$(E,\cF)$. Since the permutohedral fan is complete, Theorem
\ref{thm:K-of-sm-proj} and the explicit presentation of $\Kzar_0(X) =
K_0(X_\mathbb{C})$ given in \cite[Thm.\,5.2 and page 11]{LLPP} allow
us to determine the $K$-theory of the Boolean matroid:
\begin{ex}\label{ex:K-of-boolean}
	Fix an integer $n\geq 2$. For all $m\geq 0$, we have
	\[K_0(U_{n,n})\otimes_\Z\pi_m^s\xrightarrow{\sim} K_m(U_{n,n}).\]
	 Moreover, $K_0(U_{n,n}) = \Z[\{x_F\}, F\in \mathcal{P}_n]$, where $\mathcal{P}_n$ is the set of non-empty proper subsets of $\{1,2,\dotsc, n\}.$ Thus, the elements of $\mathcal{P}_n$ are in one-to-one correspondence with the rays of the
		 $(n-1)$-dimensional permutohedral fan. The generators $x_F$ satisfy the following list of relations:
	\begin{enumerate}
		\item $1 - \displaystyle\prod_{F} (1 - x_F)$;
		\item $1 - \displaystyle\prod_{i\notin F} (1 - x_F)$, for $i = 1,\dotsc, n$;
		\item $x_F x_G$, for all incomparable subsets $F$ and $G$ of $\{1,\dotsc, n\}.$ 
	\end{enumerate}
\end{ex}

The Bergman fan of a matroid $(E,\cF)$ that is not Boolean will not be
a complete fan, so Theorem \ref{thm:K-of-sm-proj} does not apply. In
order to determine the groups $K_n(E,\cF)$, one can use localization:

\begin{ex}\label{ex:K-of-U_{2,3}}
Consider the matroid $U_{2,3}$, with $E=\{0,1,2\}$. As indicated in
Example \ref{ex:uniform}, its flats are $\emptyset$, the one-elements
subsets $\{0\}$, $\{1\}$ and $\{2\}$, and $\{0,1,2\}$. It follows that
the only proper flats are the singletons, and the Bergman fan of
$U_{2,3}$ is the fan in $\R^2$ with rays $\bu_1$, $\bu_2$, and $\bu_0
= -\bu_1 - \bu_2$, and has no two-dimensional cones. The associated toric
monoid scheme is the complement $V$ of the three closed points in
$\bbP^2$.  Since $V$ is smooth, $K_n(U_{2,3}) = \Kzar_n(V)\cong
K'_n(V)$.

To determine the groups $K'_n(V)$, we first note that the push-forward
$K'(\pt)\xrightarrow{i_*} K'(\bbP^2)$ does not depend on the closed
point. Theorem \ref{thm:K-loc} implies that there is a long exact
sequence
\begin{equation}\label{eq:U32-sequence}
	 \dotsc\to K'_n(\pt)^{\oplus 3}\xrightarrow{\oplus i_*} K'_n(\bbP^2)\to K'_n(V) \to K'_{n-1}(\pt)^{\oplus 3}\to \dotsc 
\end{equation}
It follows from the proof of Proposition \ref{prop:proj-space} that
$K'_n(\pt)\xrightarrow{i_*} K'_n(\bbP^2)$ is a composition of split
injective maps, and hence is split injective; therefore, the long
exact sequence \eqref{eq:U32-sequence} splits into short exact
sequences
\[
0\to K'_n(\bbP^2)/i_* \left(K'_n(\pt)\right) \to K'_n(V)\to K'_{n-1}(\pt)^{\oplus 2}\to 0.
\]
Applying Proposition \ref{prop:proj-space}, we can express
$K_n(U_{2,3}) = K'_n(V)$ as an
extension of stable homotopy groups of spheres:
\[
0\to (\pi_n^s)^{\oplus 2} \to K_n(U_{2,3})\to (\pi_{n-1}^s)^{\oplus 2}\to 0.
\] 
 \end{ex}

\vspace{1cm}
\appendix 
\section{Multiplicative structure of
  the $K$-theory of monoid schemes}\label{app:mult}

In Definition \ref{defn:K-Zar}, we defined a presheaf of spectra
$\Kzar$ on $\cMpctf$, satisfying Zariski descent. In this appendix, we
briefly describe how $\Kzar$ carries a natural structure of an
$E_\infty$-ring spectrum over which $K'$ is a module.

For convenience,
we will occasionally use the language of $(\infty,1)$-categories. In
particular, we will write ``limit'' for what would be called
``homotopy limit'' if we were working with model categories. We are
not aiming for the greatest possible generality; for example, the
hypotheses ``separated'' and ``finite type'' are not needed in
Proposition \ref{prop:Kzar-is-ring} below, but they make the
exposition easier.

Let $X$ be a pctf monoid scheme. Then the quasi-exact category (see
\cite[Section 2]{HW}) of finitely generated projective
$\mathcal{O}_X$-sets has a symmetric monoidal structure under smash
product, and this structure is bi-exact. Moreover, this category acts
via biexact smash product on the category of finitely generated pc
$\mathcal{O}_X$-sets.  Thus the following Lemma follows from Barwick
\cite[Cor.\,3.8.2]{Barwick},
where $K^Q(X)$ is the  spectrum defined in \ref{defn:K-Zar}.

\begin{lem}\label{lem:K_is_ring}
  Let $X$ be a pctf monoid scheme. The spectrum $\KQ(X)$
  has the natural structure of an $E_\infty$-ring spectrum,
  and $K'(X)$ is a module over $\KQ(X)$. 
\end{lem}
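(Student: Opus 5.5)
The plan is to deduce Lemma \ref{lem:K_is_ring} from Barwick's multiplicative machinery \cite[Cor.\,3.8.2]{Barwick}. That result says a symmetric monoidal structure on a (quasi-)exact category, with biexact tensor product, induces an $E_\infty$-ring structure on its $K$-theory spectrum. It also says a biexact action of such a category on another quasi-exact category makes the latter's $K$-theory a module. So the work is to check the hypotheses for the category $\mathcal{P}(X)$ of finitely generated projective $\mathcal{O}_X$-sets and the category $\mathcal{M}(X)$ of finitely generated pc $\mathcal{O}_X$-sets, both with the quasi-exact structures of \cite[Section 2]{HW}.

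First, I check that $\mathcal{P}(X)$ is symmetric monoidal under $\wedge_{\mathcal{O}_X}$ with unit $\mathcal{O}_X$. Locally a projective set is a finite wedge of free pieces $\mathcal{O}_U$, and $\mathcal{O}_U\wedge\mathcal{O}_U\cong\mathcal{O}_U$, so smash products of finitely generated projectives are again finitely generated projective. The associativity, unit and symmetry constraints are inherited from the sheafified smash product of pointed sets, so coherence is automatic.

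Next, I verify biexactness. Fix a projective $P$. Admissible monomorphisms in the quasi-exact structure are injections of sheaves of pointed $\mathcal{O}_X$-sets, and admissible epimorphisms are the corresponding cokernel maps $Z\to Z/Y$. I need $P\wedge-$ to preserve these, and preserve the zero object, on both $\mathcal{P}(X)$ and $\mathcal{M}(X)$. This can be checked on stalks, where $P_x$ is a finite wedge of copies of $\mathcal{O}_{X,x}$. Then $P_x\wedge M\cong\bigvee M$, which visibly preserves injections, quotients and the pc condition. Hence $P\wedge-$ is exact in each variable separately. The same stalkwise argument shows that $\mathcal{P}(X)\times\mathcal{M}(X)\to\mathcal{M}(X)$ is biexact, and that the action is unital and associative up to coherent isomorphism.

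The step I expect to be the main obstacle is that Barwick's result is formulated for exact (or Waldhausen) $\infty$-categories, not quasi-exact categories of pointed sets. I would first try to pass through Waldhausen's $S_\bullet$-construction, whose $K$-theory agrees with the $Q$-construction for these quasi-exact categories by \cite{HW,CW}. I would then view $\mathcal{P}(X)$ and $\mathcal{M}(X)$ as Waldhausen categories, with cofibrations the admissible monos and weak equivalences the isomorphisms. The biexact pairings then become biexact functors of Waldhausen categories, to which Barwick's corollary applies. Finally, naturality in $X$ for open restriction holds because pullback is strong symmetric monoidal, which will be used later when sheafifying to $\Kzar$.
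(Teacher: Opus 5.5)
Your proposal matches the paper's proof. The paper likewise treats the quasi-exact categories of projective and pc $\mathcal{O}_X$-sets as Waldhausen categories, with cofibrations the admissible monomorphisms and weak equivalences the isomorphisms; it identifies the $wS_\bullet$- and $Q$-construction $K$-theories (citing Waldhausen's 1.9 rather than HW/CW) and then applies Barwick's Corollary 3.8.2 to the biexact smash product and its biexact action, while your stalkwise checks of the symmetric monoidal structure and biexactness fill in details the paper only asserts.
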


\begin{proof}
In particular, a quasi-exact category has the structure of a
Waldhausen category, where the cofibrations are admissible
monomorphisms and the weak equivalences are isomorphisms. It is
well-known (see \cite[1.9]{Waldhausen1126}) that the $K$-theory (via the
$wS_\bullet$-construction) of this Waldhausen category is equivalent
to the $K$-theory of the original quasi-exact category (via the
$Q$-construction). Now we can apply Barwick's result.
\end{proof}

\begin{lem}\label{lem:Kzar-as-finite-limit}
  Let $X$ be a separated pctf monoid scheme of finite type, and $\{U_i\}$
  a (finite) cover by affine open subschemes. Write $\mathcal{U}_\bullet$
  for the associated (\u{C}ech) simplicial scheme.
  Then we have a canonical weak homotopy equivalence
\[ \Kzar(X)\to \mathrm{lim}_{\Delta} \Kzar (\mathcal{U}_\bullet)
\simeq \mathrm{lim}_{\Delta} \KQ(\mathcal{U}_\bullet).
\]
\end{lem}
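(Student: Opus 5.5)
The equivalence splits into two halves: descent of the fibrant replacement along the Čech nerve, and the identification of $\Kzar$ with $\KQ$ on affines.

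\emph{First map.} Since $\Kzar=\bbH_{\zar}(-,\KQ)$ is fibrant in the Zariski model structure, it satisfies Zariski descent. Concretely, for a finite open cover $\{U_i\}$ of $X$, the map from $\Kzar(X)$ to the limit over the Čech nerve is an equivalence. I would prove this by induction on the number of opens in the cover, using the homotopy Mayer--Vietoris property of fibrant presheaves for two-element covers $U\cup V$: the square with corners $\Kzar(U\cup V)$, $\Kzar(U)$, $\Kzar(V)$, $\Kzar(U\cap V)$ is homotopy cartesian. A finite Čech limit can be rewritten as an iterated pullback of such squares. To see this, note that $\mathcal{U}_\bullet$ is the Čech nerve of $\coprod U_i\to X$. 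Its totalization for a cover $\{U_1,\dots,U_m\}$ agrees with the pullback of the totalizations for $U_1\cup\dots\cup U_{m-1}$ and $U_m$ over the totalization for the induced cover $\{U_i\cap U_m\}$ of $(U_1\cup\dots\cup U_{m-1})\cap U_m$. This is a cofinality (Mayer--Vietoris) statement about Čech limits, applied inductively; equivalently, one can quote the standard fact that hypercover descent for Zariski-fibrant presheaves holds for Čech nerves of finite covers. Separatedness and finite type ensure the cover can be taken finite and that all intersections $U_{i_0}\cap\dots\cap U_{i_p}$ are again affine objects of $\cMpctf$.

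\emph{Second equivalence.} Because $X$ is separated, every term $\mathcal{U}_p=\coprod U_{i_0}\cap\dots\cap U_{i_p}$ of the Čech nerve is a finite disjoint union of affine monoid schemes. By Lemma \ref{lem:affine-equivalence}, $\KQ\to\Kzar$ is an equivalence on each affine piece. Both $\KQ$ and $\Kzar$ turn finite disjoint unions into products: for $\KQ$ because sheaves of projective sets on a disjoint union split, and for $\Kzar$ by descent. The natural map $\KQ(\mathcal{U}_\bullet)\to\Kzar(\mathcal{U}_\bullet)$ is therefore a levelwise equivalence of cosimplicial spectra, and so it induces an equivalence on limits over $\Delta$.

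The main obstacle is the first step, namely a clean justification that Zariski-fibrant presheaves satisfy Čech descent for finite covers, as opposed to merely having local homotopy sheaves. I would handle it by citing the descent theory used in \cite[Sec.\,12]{CHWW} and \cite[Sec.\,5]{CHWW-p}, where fibrant objects in the Zariski structure are characterized by the Mayer--Vietoris property. The finite Čech statement then follows by the induction on the number of opens described above.
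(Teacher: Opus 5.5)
Your proposal is correct and follows the same route as the paper. The first map is an equivalence because $\Kzar=\bbH_{\zar}(-,\KQ)$ satisfies Zariski descent by construction; the paper simply asserts this, while you spell out the finite Mayer--Vietoris induction. The second equivalence is Lemma~\ref{lem:affine-equivalence} applied levelwise to the affine intersections $U_{i_0}\cap\dots\cap U_{i_p}$, together with your (correct) observation, left implicit in the paper, that $\mathcal{U}_p$ is a disjoint union of affines and both $\KQ$ and $\Kzar$ turn finite disjoint unions into products.
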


\begin{proof}
The first statement is a consequence of the fact that $\Kzar$ is a
sheaf in the Zariski topology, which is true by construction. Now the
second assertion follows from Lemma \ref{lem:affine-equivalence}.
\end{proof}

The forgetful functor, from the $\infty$-category of $E_\infty$-ring
spectra to the $\infty$-category of spectra, has a left adjoint (see
\cite{Harper}). It follows that the forgetful functor commutes with limits.

\begin{prop}\label{prop:Kzar-is-ring}
  Let $X$ be a separated pctf monoid scheme of finite type.
Then $\Kzar(X)$ has the natural structure of an $E_\infty$-ring spectrum. 
	
When $X$ is affine, the $E_\infty$ ring structure is transported from
$\KQ(X)$ via the weak homotopy equivalence $\KQ(X)\to
\Kzar(X)$. Moreover, $K'(X)$ is a module over $\Kzar(X)$.
\end{prop}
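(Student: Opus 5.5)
The plan is to exhibit $\Kzar(X)$ as a finite limit of $E_\infty$-ring spectra, then transport the module structure of $K'$ through the same limit. Fix a finite cover $\{U_i\}$ of $X$ by affine open subschemes. Since $X$ is separated, every finite intersection $U_{i_0}\cap\dots\cap U_{i_p}$ is again affine. So the \u{C}ech simplicial scheme $\mathcal{U}_\bullet$ consists of affine pctf monoid schemes of finite type, and its face maps are open immersions. By Lemma \ref{lem:K_is_ring}, $\KQ$ is a presheaf of $E_\infty$-ring spectra on $\cMpctf$. Pullback along an open immersion is a symmetric monoidal bi-exact functor on projective $\mathcal{O}$-sets, because it commutes with smash products. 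Hence Barwick's construction, being functorial, makes $\KQ(\mathcal{U}_\bullet)$ a cosimplicial diagram of $E_\infty$-ring spectra. We then define the ring structure on $\Kzar(X)$ by the equivalence of Lemma \ref{lem:Kzar-as-finite-limit},
\[ \Kzar(X)\simeq \lim_\Delta \KQ(\mathcal{U}_\bullet), \]
taking the limit in $E_\infty$-rings. Because the forgetful functor to spectra is a right adjoint, it preserves limits, so the underlying spectrum of this limit is the one in Lemma \ref{lem:Kzar-as-finite-limit}.

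Next we check that this is natural and independent of the cover. Any two finite affine covers have a common refinement, namely the pairwise intersections, which are affine by separatedness. The comparison maps of \u{C}ech limits along refinements are maps of $E_\infty$-rings, and by Lemma \ref{lem:Kzar-as-finite-limit} they are equivalences on underlying spectra. A map of $E_\infty$-rings that is an equivalence on underlying spectra is an equivalence of $E_\infty$-rings. So the structure is well defined up to contractible choice, and functoriality in $X$ follows in the same way. When $X$ is affine, we use the trivial cover $\{X\}$. The \u{C}ech diagram is then constant, its limit is $\KQ(X)$, and the structure is transported along $\KQ(X)\simeq\Kzar(X)$, as the statement asserts.

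For the module statement, Lemma \ref{lem:K_is_ring} makes each $K'(\mathcal{U}_p)$ a $\KQ(\mathcal{U}_p)$-module, compatibly with open restrictions, because pullback of pc sets commutes with the smash action. This gives a cosimplicial object in the $\infty$-category of pairs (ring, module). Taking its limit, $\lim_\Delta K'(\mathcal{U}_\bullet)$ becomes a module over $\lim_\Delta\KQ(\mathcal{U}_\bullet)\simeq\Kzar(X)$. The limit is computed on underlying spectra, since the forgetful functor from pairs preserves limits. Finally, Lemma \ref{lem:K'-descent}, that is, Zariski descent for $K'$, identifies $K'(X)\simeq\lim_\Delta K'(\mathcal{U}_\bullet)$.

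I expect the main obstacle to be coherence: promoting the objectwise $E_\infty$-structures and module structures from Barwick's result to genuine functors of $\infty$-categories on the \u{C}ech diagram. For this I would use functoriality of $K$-theory as a lax symmetric monoidal functor from symmetric monoidal Waldhausen categories. I would apply it to the diagram of categories of projective and pc $\mathcal{O}_{U}$-sets with restriction functors, and only afterwards apply the identification with the $Q$-construction.
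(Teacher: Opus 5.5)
Your proposal is correct and follows essentially the same route as the paper. You write $\Kzar(X)\simeq\lim_\Delta \KQ(\mathcal{U}_\bullet)$ over a finite affine cover, use that the forgetful functors from $E_\infty$-rings (and from ring--module pairs) to spectra preserve limits, and identify $K'(X)$ with the corresponding limit via Zariski descent (Lemma \ref{lem:K'-descent}); your remarks on the affine case via the trivial cover, independence of the cover, and coherence of the cosimplicial diagram spell out points the paper's two-line proof leaves implicit.
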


\begin{proof}
  As the forgetful functor to spectra commutes with limits,
  the first assertion follows from Lemmas \ref{lem:K_is_ring}
  and \ref{lem:Kzar-as-finite-limit}.

  It also follows from Lemma \ref{lem:K_is_ring} that
  $\bbH_\zar(X,K')$ is a module over $\Kzar(X)$. Since the
  presheaf $K'$ satisfies descent on $X_\zar$ by Lemma \ref{lem:K'-descent}, the second assertion follows. 
\end{proof}

Recall from Theorem \ref{thm:comparison} that $K'_0(X)\cong K'_0(X_k)$.

\begin{cor}\label{cor:vb-action}
  Let $X$ be a smooth toric monoid scheme of finite type, $k$ a field,
  and $n\geq 0$. Then $\Kzar_n(X)$ and $K'_n(X)$ have natural
  module structures over the commutative ring $\Kzar_0(X)\cong K_0(X_k)$.
  In particular, every vector bundle $\mathcal{E}$ on $X_k$
  determines a homomorphism $[\mathcal{E}]\cup - : K'_n(X) \to K'_n(X)$.
\end{cor}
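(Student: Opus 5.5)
The plan is to deduce the module structures from Proposition \ref{prop:Kzar-is-ring}, and then to identify $\Kzar_0(X)$ with $K_0(X_k)$ as a ring.

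\textbf{Module structures.} By Proposition \ref{prop:Kzar-is-ring}, $\Kzar(X)$ is an $E_\infty$-ring spectrum and $K'(X)$ is a $\Kzar(X)$-module spectrum. For any $E_\infty$-ring spectrum $R$ and $R$-module $M$, $\pi_0R$ is a commutative ring. Taking homotopy groups, the multiplication $R\wedge R\to R$ and action $R\wedge M\to M$ induce pairings $\pi_0R\otimes\pi_nR\to\pi_nR$ and $\pi_0R\otimes\pi_nM\to\pi_nM$. These make $\pi_nR$ and $\pi_nM$ into $\pi_0R$-modules; associativity and unitality hold because they hold up to homotopy. Applying this with $R=\Kzar(X)$ and $M=K'(X)$ gives the required $\Kzar_0(X)$-module structures on $\Kzar_n(X)$ and $K'_n(X)$. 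Naturality follows because the ring and module structures come from the functorial construction of Lemma \ref{lem:K_is_ring} and the limit in Lemma \ref{lem:Kzar-as-finite-limit}.

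\textbf{Identification with $K_0(X_k)$.} Corollary \ref{cor:K_0} already gives an isomorphism of groups $\Kzar_0(X)\cong K'_0(X)\cong K_0(X_k)$, using smoothness of $X_k$ to identify $K'_0$ with $K_0$. It remains to check that this isomorphism respects products.

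The realization functor sends projective $\mathcal{O}_X$-sets $P$ to vector bundles $k[P]$ on $X_k$. It carries smash products to tensor products, since $k[P\wedge Q]\cong k[P]\otimes k[Q]$. On each affine open this makes $\KQ(U)\to K(U_k)$ a map of ring spectra, hence of rings on $\pi_0$. Gluing over a finite affine cover via Lemma \ref{lem:Kzar-as-finite-limit}, and using that $K$ of smooth schemes satisfies Zariski descent, gives a map of ring spectra $\Kzar(X)\to K(X_k)$. On $\pi_0$ this map is the isomorphism of Corollary \ref{cor:K_0}.

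\textbf{Main obstacle.} The hardest step is making this realization map genuinely multiplicative in the $E_\infty$ sense compatibly with descent. If that is too heavy, it suffices to check multiplicativity on $\pi_0$ directly. Both rings embed compatibly into products over affine pieces, because $\Kzar_0(X)\to\prod_i\Kzar_0(U_i)$ and $K_0(X_k)\to\prod_iK_0(U_{i,k})$ are compatible. On affine smooth $U$, both $\KQ_0(U)$ and $K_0(U_k)$ are $\Z$. However, injectivity into this product fails for proper $X$, so the argument must instead be run at the level of the descent spectral sequence. Alternatively, one can simply transport the ring structure along the isomorphism of Corollary \ref{cor:K_0}.

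\textbf{Vector bundles.} Given this identification, a vector bundle $\mathcal E$ on $X_k$ defines $[\mathcal E]\in K_0(X_k)\cong\Kzar_0(X)$. The map $[\mathcal E]\cup-$ is then multiplication by this element in the module structure on $K'_n(X)$ constructed above.
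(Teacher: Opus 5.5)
Your proposal follows the paper's proof, which simply combines Proposition \ref{prop:Kzar-is-ring} for the module structures with Theorems \ref{thm:K-vs-K'} and \ref{thm:comparison} (that is, Corollary \ref{cor:K_0}) for the identification $\Kzar_0(X)\cong K_0(X_k)$. Your extra sketch that this identification is multiplicative goes beyond what the paper checks and is sound in outline: you use the symmetric monoidal realization $\KQ(U)\to K(U_k)$ on affines, glued by Zariski descent for $K$ of smooth schemes. By contrast, the fallback of merely transporting the ring structure along the isomorphism would not show that the $E_\infty$-induced product on $\Kzar_0(X)$ agrees with the product on $K_0(X_k)$.
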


\begin{proof}
	Combine Proposition \ref{prop:Kzar-is-ring}, Theorem \ref{thm:comparison} and Theorem \ref{thm:K-vs-K'}.\end{proof}

In our proof of the projective bundle formula in Section
\ref{sec:proj-bundle-formula}, we need a special case of the
projection formula in $K$-theory. Recall that $\{0,1\}$ is the initial
pointed commutative monoid. The category of finitely generated
$\{0,1\}$-sets is simply the category $\mathbf{Sets}_*$ of finite
pointed sets. Every short exact sequence of pointed sets splits,
meaning that if we are given a surjection $A\to B$ of pointed sets with kernel
$K$, there is a (unique) injection $j:B\to A$ such that $A = K\vee j(B)$.

Suppose now $X$ is a smooth toric monoid scheme, and $i:Y\to X$ is the
closed immersion of an equivariant smooth toric monoid
subscheme. Write $\Sh_Y$ and $\Sh_X$ for the categories of sheaves of
finitely generated pc $\mathcal{O}_Y$- and pc $\mathcal{O}_X$-sets,
respectively. Then we have
an exact pushforward functor $i_*:\Sh_Y\to \Sh_X$, and bi-exact action
functors \[\wedge_Y: \Sh_Y\times \mathbf{Sets}_*\to
\Sh_Y\;\mathrm{and}\;\wedge_X:\Sh_X\times \mathbf{Sets}_*\to \Sh_X\]
given by smash products.

Further, we write $q_Y:Y\to \pt$ and $q_X:X\to \pt$ for the structure
maps; here $\pt$ is the final monoid scheme $\MSpec\{0,1\}$.
\begin{lem}\label{lem:projection_formula_1}
	There is an identity of functors 
	\[\wedge_X\circ (i_*\times\mathrm{id}) = i_* \circ \wedge_Y:  \Sh_Y\times\mathbf{Sets}_*\to \Sh_X.\]
	 Therefore, if $y\in \Kzar_0 (Y)$, and $\alpha\in K'_n(\pt)$, then
	\[ i_*(y)\cup q_X^*(\alpha)=i_*(y\cup q_Y^*(\alpha)).\]
\end{lem}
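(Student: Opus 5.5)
The plan has two parts: first verify the identity of functors objectwise and on morphisms, then pass to $K$-theory using functoriality of the $Q$-construction (equivalently Waldhausen's $S_\bullet$-construction) for exact and bi-exact functors.

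For the functor identity, take $\mathcal{G}\in\Sh_Y$ and a finite pointed set $S$. Both $i_*(\mathcal{G}\wedge_Y S)$ and $i_*(\mathcal{G})\wedge_X S$ are computed on opens $U\subseteq X$. We have $(i_*\mathcal{G}\wedge_X S)(U)=\mathcal{G}(U\cap Y)\wedge S$, since smashing with a finite pointed set is a finite wedge $\bigvee_{S\setminus\{*\}}$, which commutes with sheafification and with restriction. Likewise $i_*(\mathcal{G}\wedge_Y S)(U)=(\mathcal{G}\wedge S)(U\cap Y)=\mathcal{G}(U\cap Y)\wedge S$. The $\mathcal{O}_X$-action on both sides factors through $\mathcal{O}_X\to i_*\mathcal{O}_Y$ acting on the first factor, so the two sheaves agree as $\mathcal{O}_X$-sets. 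Both sides are also visibly natural in $\mathcal{G}$ and $S$, so they agree on morphisms. Strictly speaking this gives a canonical natural isomorphism, which is the identity on the nose once wedge sums are fixed by a convention; a natural isomorphism is all the $K$-theory argument needs.

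For the $K$-theory consequence, apply $K$-theory to the commutative square of bi-exact and exact functors. This gives a homotopy-commutative square of pairings
\[
K'(Y)\wedge K(\mathbf{Sets}_*)\to K'(Y),\qquad K'(X)\wedge K(\mathbf{Sets}_*)\to K'(X),
\]
compatible with $i_*$. Here $K(\mathbf{Sets}_*)=K'(\pt)\simeq\mathbb{S}$. On homotopy groups this yields $i_*(y\cdot\alpha)=i_*(y)\cdot\alpha$ for $y\in K'_0(Y)$ and $\alpha\in\pi_n^s$.

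It remains to identify the pairing $-\wedge_X S$ with $z\mapsto z\cup q_X^*(\alpha)$, i.e.\ with the module action of Corollary~\ref{cor:vb-action} restricted along $q_X^*:K'(\pt)=\KQ(\pt)\to\Kzar(X)$. The pullback $q_X^*$ sends a pointed set $S$ to the free sheaf $\mathcal{O}_X\wedge S$, and $\mathcal{F}\wedge_{\mathcal{O}_X}(\mathcal{O}_X\wedge S)\cong\mathcal{F}\wedge S$ naturally. So at the level of quasi-exact categories the action of $\KQ(X)$ on $K'(X)$, precomposed with $q_X^*$, is exactly $\wedge_X$; the same holds for $Y$. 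By Theorem~\ref{thm:K-vs-K'}, on smooth $Y$ we may identify $y\in\Kzar_0(Y)$ with its class in $K'_0(Y)$, which makes sense of $i_*(y)$.

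The main obstacle is compatibility with the Zariski sheafification. The action in Corollary~\ref{cor:vb-action} is defined through $\Kzar(X)=\lim_\Delta\KQ(\mathcal{U}_\bullet)$ acting on $\bbH_\zar(X,K')\simeq K'(X)$, not directly by $\KQ(X)$. I would handle this by noting that $q_X^*(\alpha)$ comes from the global map $\KQ(\pt)\to\KQ(X)\to\Kzar(X)$. The actions are compatible with restriction to each affine $U$ in the \v{C}ech nerve, so by naturality of the limit the global action of $q_X^*(\alpha)$ agrees with the action obtained from $\KQ(X)$ via Barwick's pairing (Lemma~\ref{lem:K_is_ring}). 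Then the square from the functor identity applies directly.
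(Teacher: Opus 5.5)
Your proposal is correct and follows the paper's route. The paper also obtains the functor identity directly (``by inspection'') and then reads the two sides of the formula as the maps on $K$-theory induced by $\wedge_X\circ(i_*\times\mathrm{id})$ and $i_*\circ\wedge_Y$; your identification of the $\wedge$-pairing with cup product by $q_X^*(\alpha)$, compatibly with the Zariski sheafification, makes explicit what the paper leaves implicit. One small correction: your sectionwise formula $(i_*\mathcal{G}\wedge_X S)(U)=\mathcal{G}(U\cap Y)\wedge S$ holds only on the minimal affine neighbourhoods $U_x$ (i.e.\ on stalks), because the sheafified smash product acquires extra glued sections on general opens, but checking there is enough to identify the two sheaves.
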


\begin{proof}
The first identity follows by inspection. To conclude the second
identity, observe that, viewed as functions on pairs $(y,\alpha)$, the
left--hand side is induced by the functor $\wedge_X\circ
(i_*\times\mathrm{id})$, and the right--hand side by the functor $i_*
\circ \wedge_Y$.
\end{proof}

We conclude this appendix with a brief discussion of the action of the
symmetric monoidal category $\mathbf{Ab}_{\mathrm{free}}$ of free abelian groups on the stable
homotopy category. This is used in Section \ref{sec:K-of-sm-proj} to identify
$\Kzar(X)$ as a ring spectrum, for $X$ a smooth proper monoid
scheme. Given an additive category
$\mathfrak{C}$ admitting small coproducts (for example, the stable
homotopy category), there is a natural functor
\[
\mathbf{Ab}_{\mathrm{free}}\times \mathfrak{C}
\xrightarrow{\otimes} \mathfrak{C}
\]
 giving $\mathfrak{C}$ the structure of a category tensored over
 $\mathbf{Ab}_{\mathrm{free}}$. It is determined by the properties
 that $\Z\otimes -: \mathfrak{C}\to\mathfrak{C}$ is the identity
 functor, and $\otimes$ preserves coproducts in the first variable. If
 $\mathfrak{C}$ is triangulated (as is, for example, the stable
 homotopy category), then $\otimes$ is exact in the second variable.
 
Suppose that $(\mathfrak{C},\wedge)$ is moreover symmetric monoidal,
 and $\wedge$ preserves coproducts in each variable. If $R$ is a free
 abelian group, and $\mathbf{A}\in \mathfrak{C}$, there is a canonical
 equivalence
 \[
 (R\otimes\mathbf{A})\wedge (R\otimes\mathbf{A})\xrightarrow{\sim} (R\otimes_\Z R)\otimes (\mathbf{A}\wedge\mathbf{A})
 \]
in $\mathfrak{C}$. Moreover, if $R$ is a commutative ring with
multiplication $m:R\otimes_\Z R\to R$, and $\mathbf{A}$ is a
commutative ring in $\mathfrak{C}$ with multiplication
$\mu:\mathbf{A}\wedge\mathbf{A}\to\mathbf{A}$, then we have:
 
 \begin{prop}\label{prop:algebras}
 	 The map
 	\[ (R\otimes\mathbf{A})\wedge (R\otimes\mathbf{A})\xrightarrow{\sim} (R\otimes_\Z R)\otimes (\mathbf{A}\wedge\mathbf{A})\xrightarrow{m\otimes\mu} R\otimes\mathbf{A},\]
 	induced by the products on $R$ and $\mathbf{A},$
 	makes $R\otimes\mathbf{A}$ a commutative ring in $\mathfrak{C}$.
 \end{prop}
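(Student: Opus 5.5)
We have to check that the composite $(R\otimes\mathbf{A})\wedge(R\otimes\mathbf{A})\to R\otimes\mathbf{A}$ is associative, commutative and unital. The plan is to reduce every axiom to naturality of the structural equivalence
\[
\theta_{R,S}\colon (R\otimes\mathbf{A})\wedge(S\otimes\mathbf{B})\xrightarrow{\sim}(R\otimes_\Z S)\otimes(\mathbf{A}\wedge\mathbf{B})
\]
together with the corresponding axioms for $R$ (in abelian groups) and for $\mathbf{A}$ (in $\mathfrak{C}$). First I would construct $\theta$ for arbitrary free $R,S$ and arbitrary objects $\mathbf{A},\mathbf{B}$. Choose bases $R=\bigoplus_{I}\Z$ and $S=\bigoplus_J\Z$. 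Then $R\otimes\mathbf{A}=\coprod_I\mathbf{A}$. Because $\wedge$ preserves coproducts in each variable, the left side is $\coprod_{I\times J}\mathbf{A}\wedge\mathbf{B}$, and this equals the right side since $R\otimes_\Z S$ is free on $I\times J$.

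The key point is that $\theta$ is natural in all four variables, and in particular in arbitrary homomorphisms $R\to R'$, not only basis-preserving ones. To prove this I would note that $\otimes$ is the unique functor extending $\Z\otimes-=\mathrm{id}$ and preserving coproducts. A map $R\to R'$ is then a matrix of integers, and it acts on $\coprod_I\mathbf{A}\to\coprod_{I'}\mathbf{A}$ through the additive structure of $\mathfrak{C}$. Additivity of $\wedge$ in each variable, which follows from preservation of finite coproducts (biproducts), lets these integer matrices pass through $\wedge$. This makes $\theta$ independent of the chosen bases and natural.

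I would also check that $\theta$ is compatible with the symmetry and associativity constraints. This means:
\begin{itemize}
\item under $\theta$, the twist $\tau$ of $\wedge$ corresponds to the twist $R\otimes S\cong S\otimes R$ tensored with $\tau_{\mathbf{A},\mathbf{B}}$;
\item the two ways of iterating $\theta$ on a triple product agree.
\end{itemize}
Both reduce, summand by summand over basis indices, to the corresponding coherence in $\mathfrak{C}$, since $\theta$ is built from canonical coproduct isomorphisms.

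With these in place, the axioms are diagram chases. For associativity, apply $\theta$ twice to identify $(R\otimes\mathbf{A})^{\wedge3}$ with $R^{\otimes 3}\otimes\mathbf{A}^{\wedge3}$. Both ways of multiplying then become $(m\circ(m\otimes1))\otimes(\mu\circ(\mu\wedge1))$ and $(m\circ(1\otimes m))\otimes(\mu\circ(1\wedge\mu))$, which agree by associativity of $m$ and $\mu$ and by naturality of $\theta$. Commutativity follows from the symmetry compatibility and the commutativity of $m$ and $\mu$. The unit is $\mathbb{S}\cong\Z\otimes\mathbf{1}\xrightarrow{1\otimes\eta}R\otimes\mathbf{A}$, where $\eta$ is the unit of $\mathbf{A}$ and the first map uses the ring unit $\Z\to R$. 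The unit law reduces via $\theta_{\Z,R}$ to the unit laws of $R$ and $\mathbf{A}$.

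The main obstacle is the naturality of $\theta$ in non-basis-preserving maps $R\to R'$, which is exactly what is needed to feed $m$ through $\theta$. I would handle it as described above, by reducing to additivity of $\wedge$ on hom-groups.
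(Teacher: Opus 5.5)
Your proposal is correct and follows essentially the paper's route: the paper cites the literature for the fact that $\otimes$ is strongly (symmetric) monoidal in the first variable, then calls the verification of the ring axioms straightforward (or defers to a more general result of Schwede). You instead build that monoidal structure $\theta$ by hand. You choose bases, use additivity of $\wedge$ to obtain naturality in non-basis-preserving maps such as $m$ and the unit $\Z\to R$, check coherence with the twist and associator summand by summand, and then carry out the routine diagram chases.
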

 
\begin{proof}
As explained in \cite[Sec.\,6]{Pa}, the functor $\otimes$ is
(strongly) monoidal in the first variable. It is now straightforward
(if tedious) to check that the operation given in the assertion endows
$R\otimes \mathbf{A}$ with the structure of a commutative ring in
$\mathfrak{C}$. In a much more general context this is proved, for
example, as part of \cite[Cor.\,3.6]{Sch}.
 \end{proof}
 
 In particular, suppose $\mathbf{E}$ is a commutative ring spectrum, and $R$ is a commutative ring that is free as abelian group. Then:

 \begin{cor}\label{cor:algebras}
$R\otimes\mathbf{E}$ has a natural structure of
commutative ring spectrum such that the following
natural map of graded rings is an isomorphism:
\[
R\oo_\Z \pi_* (\mathbf{E})   \xrightarrow{\sim}
\pi_*(R\otimes\mathbf{E})
\]
 \end{cor}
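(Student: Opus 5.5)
The plan is to build the ring spectrum $R\otimes\mathbf{E}$ from Proposition \ref{prop:algebras} and then compute its homotopy groups directly from the definition of the tensoring. Take $\mathfrak{C}$ to be the stable homotopy category with smash product; it is additive, triangulated, and symmetric monoidal, and $\wedge$ preserves arbitrary wedges (coproducts) in each variable. Let $\mathbf{A}=\mathbf{E}$, a commutative monoid in $\mathfrak{C}$. Then Proposition \ref{prop:algebras} equips $R\otimes\mathbf{E}$ with a commutative ring structure in $\mathfrak{C}$, with product given by the composite of the canonical equivalence and $m\otimes\mu$. Naturality in $R$ and $\mathbf{E}$ holds because every map in this construction is natural: the tensoring functor, the canonical equivalence, and the multiplications.

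For homotopy groups, choose a basis $B$ of $R$. Since $\otimes$ preserves coproducts in the first variable and $\Z\otimes-$ is the identity, we get $R\otimes\mathbf{E}\simeq\bigvee_{b\in B}\mathbf{E}$. Homotopy groups commute with arbitrary wedges of spectra, because spheres are compact in the stable homotopy category. Hence
\[
\pi_*(R\otimes\mathbf{E})\cong\bigoplus_{b\in B}\pi_*(\mathbf{E})\cong R\otimes_\Z\pi_*(\mathbf{E}).
\]
Define the natural map $R\otimes_\Z\pi_*(\mathbf{E})\to\pi_*(R\otimes\mathbf{E})$ without using a basis: an element $r\in R$ is a homomorphism $\Z\to R$, which induces $r\otimes\mathrm{id}:\mathbf{E}=\Z\otimes\mathbf{E}\to R\otimes\mathbf{E}$. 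Send $r\otimes x$ to $(r\otimes\mathrm{id})_*(x)$. On basis elements this map is exactly the wedge inclusion, so it agrees with the isomorphism above and does not depend on the choice of basis.

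The step I expect to need the most care is multiplicativity of this map. Given $r\otimes x$ and $s\otimes y$, the product in $\pi_*(R\otimes\mathbf{E})$ of $(r\otimes\mathrm{id})_*x$ and $(s\otimes\mathrm{id})_*y$ is obtained by first smashing the two classes. The canonical equivalence sends the result to $(r\otimes s)\otimes(x\wedge y)$ in $\pi_*\bigl((R\otimes R)\otimes(\mathbf{E}\wedge\mathbf{E})\bigr)$; this uses the monoidality of $\otimes$ in the first variable, together with naturality of the equivalence applied to the maps $r,s:\Z\to R$, which reduces the claim to the case $R=\Z$. Applying $m\otimes\mu$ then gives $(rs\otimes\mathrm{id})_*(xy)$, which is the image of $rs\otimes xy$. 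Signs follow the usual graded convention coming from $\pi_*\mathbf{E}$, and the unit is the image of $1\otimes 1$. Therefore the isomorphism is an isomorphism of graded rings.
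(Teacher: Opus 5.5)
Your proposal is correct and is essentially the argument the paper intends. The paper gives no separate proof: it presents the corollary as an immediate consequence of Proposition \ref{prop:algebras} applied in the stable homotopy category, with the homotopy computation coming from $\otimes$ preserving coproducts in the first variable. Your basis-free definition of the map and your multiplicativity check, via naturality of the canonical equivalence in $r,s\colon\Z\to R$, spell out details the paper leaves implicit.
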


 \subsection*{Acknowledgements} The first author thanks
 Brooke Shipley and Lior Yanovski for pointing out intricacies
 around the monoidal structures discussed in the appendix. 


\subsection*{AI statement.} AI was used only for literature searches. 
 All mathematics and writing in this paper was done by humans.

\end{document}